\theoremstyle{plain}
\newtheorem{theorem}{Theorem}
\newtheorem*{theorem*}{Theorem}
\newtheorem{corollary}{Corollary}
\newtheorem*{corollary*}{Corollary}
\newtheorem{lemma}{Lemma}
\newtheorem*{lemma*}{Lemma}
\newtheorem{proposition}{Proposition}
\newtheorem*{proposition*}{Proposition}
\newtheorem*{conjecture*}{Conjecture}
\theoremstyle{definition}
\newtheorem{definition}{Definition}
\newtheorem*{definition*}{Definition}
\theoremstyle{remark}
\newtheorem{remark}{Remark}
\newtheorem*{remark*}{Remark}
\begin{document}

\title[Application of infinite systems of functional equations]{On one class of functions with complicated local structure that the solutions of infinite systems of functional equations (On one application of infinite systems of functional equations in the functions theory)}
\author{Symon Serbenyuk}
\address{
  45, Shchukina Str. \\
  Vinnytsia \\
  21012 \\
  Ukraine}
\email{simon6@ukr.net}

\subjclass[2010]{ 39B72, 11K55, 26A27, 26A30, 26A42}

\keywords{Function with complicated local structure, functional equations systems, singular function, nowhere differentiable function, distribution function, nowhere monotone function,  $\tilde Q$-representation, nega-$\tilde Q$-representation,  Lebesgue integral.}

\begin{abstract}

The article is devoted to investigation of applications of infinite systems of functional equations for modeling of functions with complicated local structure, that defined in terms of nega-$\tilde Q$-representation. The infinite   system of functional equations
$$
f\left(\hat \varphi^k(x)\right)=\tilde \beta_{i_{k+1},k+1}+\tilde p_{i_{k+1},k+1}f\left(\hat \varphi^{k+1}(x)\right),
$$
where  $k=0,1,...$, $\hat \varphi$ is a shift operator of  $\tilde Q$-expansion, $x=\Delta^{-\tilde Q} _{i_1(x)i_2(x)...i_n(x)...}$, are investigated. It is proved, that the system has an  unique solution in the class of determined and bounded on $[0;1]$ functions  and continuity of the solution. His analytical presentation is founded. Conditions of its monotonicity and nonmonotonicity,  differential, integral properties are studied. Conditions under which
the solution of the functional equations system is a distribution function of random variable $\eta=\Delta^{\tilde Q} _{\xi_1\xi_2...\xi_n...}$ with independent $\tilde Q$-symbols are  discovered.
\end{abstract}

\maketitle


\section{Introduction}

Nowadays it is well known that functional equations and systems of functional equations are using widely in mathematics and other sciences. For example, in the information theory, physics, economics, decision theory,  etc.  \cite{A1988, AD2003, D1985,  L1997}. A modeling of functions with complicated local structure by systems of functional equations is a shining example of their applications in the function theory. 

A class of functions with complicated local structure consists of singular (for example, \cite{{Salem1943}, {S. Serbenyuk abstract 1}, {S. Serbenyuk abstract 1-article}, {Zamfirescu1981}, {Pra92}}), continuous nowhere monotonic \cite{PK2013} and nowhere differentiable functions  (for example, \cite{{Bush1952}, {Symon12(2)}, {S. Serbenyuk abstract 8}, {S. Serbenyuk, functions with complicated local structure 2013}, {S. Serbenyuk abstract 6},{S. Serbenyuk systemy rivnyan 2-2},  {Pra92}}). 

An example of a strictly  increasing singular function  is the following function, that is called the Salem function
$$
f(x)=\beta_{\alpha_1(x)}+\sum^{\infty} _{n=1}{\left(\beta_{\alpha_n(x)}\prod^{n-1} _{j=1}{p_{\alpha_j(x)}}\right)},
$$
where 
$$
x=\Delta^2 _{\alpha_1\alpha_2...\alpha_n...}\equiv \sum^{\infty} _{n=1}{\frac{\alpha_n}{2^n}}, ~\alpha_n\in\{0,1\},
$$
 $Q_2=\{p_0,p_1\}$ is a fixed tuple of integers such that  $p_0+p_1=1$, and
$$
\beta_{\alpha_n(x)}=\begin{cases}
0,&\text{if $\alpha_n(x)=0$,}\\
p_0,&\text{if $\alpha_k(x)=1$,}
\end{cases}
$$
Properties of the function (including singularity) were studied by Salem in \cite{Salem1943} and other authors  in  \cite{M1971, Pra92, K2008}. The Salem function is a distribution function of  a random variable with  independent identically distributed binary digits and an unique solution of the following functional equations system in the class of determined and bounded on $[0;1]$ functions:
$$
\left\{
\begin{aligned}
f\left(\frac{x}{2}\right)&=p_0f(x),\\
f\left(\frac{x+1}{2}\right) & = p_0+p_1f(x).\\
\end{aligned}
\right.
$$
The system can be   written as follows (for functions, that determined on the segment $[0;1]$): 
$$
f\left(\Delta^2 _{i\alpha_1\alpha_2...\alpha_n...}\right)=\beta_i+p_if\left(\Delta^2 _{\alpha_1\alpha_2...\alpha_n...}\right).
$$

In \cite{PK2011} M. Pratsiovytyi and A. Kalashnikov investigated the following generalization of the Salem function 
$$
f(x)=\beta_{\alpha_1(x)}+\sum^{\infty} _{n=2}{\left(\beta_{\alpha_n(x)}\prod^{n-1} _{j=1}{p_{\alpha_j(x)}}\right)},
$$
where
$$
x=\Delta^s _{\alpha_1\alpha_2...\alpha_n...}\equiv\sum^{\infty} _{n=1}{\frac{\alpha_n}{s^n}}, ~~~\alpha_n\in\{0,1,...,s-1\},
$$
$2<s$ is a fixed positive integer number, 
$$
\beta_{\alpha_n(x)}=\begin{cases}
0,&\text{if $\alpha_n(x)=0$,}\\
\sum^{\alpha_n(x)-1} _{i=0}{p_i}>0,&\text{if $\alpha_n(x)\ne 0$,}
\end{cases}
$$
The last-mentioned function is an unique solution of the following system of functional equations  in the class of determined and bounded on $[0;1]$ functions:
$$
f\left(\frac{i+x}{s}\right)= \beta_i+p_if(x),
$$
where $i=0,1,...,s-1$,  $p_i$  is a real numbers from  $(-1;1)$ and $p_0+$ $+p_1~+~...~+p_{s-1}=1$.  

In \cite{PK2013} these authors considered the system of $s$ functional equations 
$$
f\left(\Delta^Q _{i\alpha_1\alpha_2...\alpha_n...}\right)= \beta_{i}+p_{i}f\left(\Delta^Q _{\alpha_1\alpha_2...\alpha_n...}\right),~~~i=\overline{0,s-1},
$$
where $\max_i {|p_i|}<1$, $p_0+p_1+...+p_{s-1}=1$, $\beta_0=0<\beta_k=\sum^{k-1} _{i=0}{p_i}$, 
and an argument of $f$ is represented in terms a $Q$-representation \cite[p.~87]{Pra98}. A $Q$-representation is a generalization of an s-adic representation $\Delta^s _{\alpha_1\alpha_2...\alpha_n...}$. That is 
$$
\Delta^Q _{\alpha_1\alpha_2...\alpha_n...}\equiv \gamma_{\alpha_1}+\sum^{\infty} _{n=2}{\left(\gamma_{\alpha_n}\prod^{n-1} _{j=1}{q_{\alpha_j}}\right)}=x\in [0;1],
$$
where $1<s$ is a fixed positive integer number, $Q=\{q_0,q_1,...,q_{s-1}\}$ is a fixed tuple of real numbers, that $q_i>0$ for all $i=\overline{0,s-1}$, $\sum^{s-1} _{j=0}{q_j}=1$, $\gamma_0=0$, $\gamma_k=\sum^{k-1} _{j=0}{q_j}$ for $k=1,2,...,s-1$.
The investigations from  \cite{PK2013} are generalization of investigations  from \cite{PK2011}.

Introducing and investigations of generalizations of the Salem function are new and non-known for real numbers representations $\Delta_{\delta_1\delta_2...\delta_n...}$ with removable  alphabet. That is, when  $\delta_n\in A^{0} _{t_n}\equiv\{0,1,...,t_n\}$, $t_n\in\mathbb N$, and $|A^{0} _{t_k}|\ne~|A^{0} _{t_l}|$ for $k\ne l$, where $|\cdot|$ is  the number of elements of the set. Representations of real numbers by the positive \cite{{Cantor1}, {Ralko10},{Symon3}} and the alternating \cite{{S. Serbenyuk, alternating Cantor series, 2013}, {Symon4}}  Cantor series, $\tilde Q$- \cite[p. 87-91]{Pra98} and nega-$\tilde Q$-representation \cite{{S. Serbenyuk abstract 10}, {S. Serbenyuk nega-tilde Q-representation}} are these representations. For the first time such investigations were carried out exactly by the author of the present article for the case of the positive Cantor series \cite{{S. Serbenyuk abstract 9}, {S. Serbenyuk systemy rivnyan 1}} and presented in April 2014 on International Mathematical Conference "Differential equations, computational mathematics, functions  theory and mathematical methods in mechanics"~\cite{{S. Serbenyuk abstract 9}}.

In October 2014 the contents of this article and similar researches for the case of the positive and alternating Cantor series (the papers \cite{{S. Serbenyuk abstract 9}, {S. Serbenyuk systemy rivnyan 1}, {S. Serbenyuk systemy rivnyan 2}, {S. Serbenyuk abstract 10}, {S. Serbenyuk nega-tilde Q-representation}, {S. Serbenyuk abstract 11}}) were presented by the author in the reports "Determination of a class of functions, that represented by the Cantor series, by systems of functional equations", "Polybasic positive and alternating $\tilde Q$-representations and their applications to determination of functions by systems of  functional equations" at a seminar on fractal analysis  of Institute of Mathematics of NAS of Ukraine and National Pedagogical Dragomanov University (archive of reports is available here: http://www.imath.kiev.ua/events/index.php?seminarId=21\&archiv=1). 

The present article is devoted to studying of one example of applications of systems of functional equations  to modeling of functions with complicated local structure. The following  functional equations system, that is a new and non-investigated at this moment, is studied:
$$
f\left(\hat \varphi^k(x)\right)=\tilde \beta_{i_{k+1},k+1}+\tilde p_{i_{k+1},k+1}f\left(\hat \varphi^{k+1}(x)\right),
$$
where $k=0,1,...$, $\hat \varphi$ is a shift operator of a  $\tilde Q$-expansion, $x=\Delta^{-\tilde Q} _{i_1(x)i_2(x)...i_n(x)...}\equiv \ \equiv \Delta^{\tilde Q} _{i_1(x)[m_2-i_2(x)]i_3...i_{2k-1}(x)[m_{2k}-i_{2k}(x)]...}$.
The properties of the unique solution of the last-mentioned system in the class of determined and bounded on [0; 1] functions 
$$
F(x)=\beta_{i_1(x),1}+\sum^{\infty} _{k=2}{\left[\tilde \beta_{i_k(x),k}\prod^{k-1} _{j=1}{\tilde p_{i_j(x),j}}\right]}.
$$
are investigated.

\section{$\tilde Q$-representation, its partial cases and shift operator}

Let $\tilde Q=||q_{i,n}||$ be a fixed matrix, where  $i=\overline{0,m_n}$, $m_n \in N^{0} _{\infty}= \mathbb N \cup \{0,\infty\}$, $n=1,2,...$, and the following system of poperties is true for elements $q_{i,n}$ of the last-mentioned  matrix
$$
\left\{
\begin{aligned}
\label{eq: tilde Q 1}
1^{\circ}.  ~~~~~~~~~~~~~~~~~~~~~~~~~~~~~~~~q_{i,n}>0;\\
2^{\circ}.  ~~~~~~~~~~~~~~~\forall n \in \mathbb N: \sum^{m_n}_{i=0} {q_{i,n}}=1;\\
3^{\circ}.  \forall (i_n), i_n \in \mathbb N \cup \{0\}: \prod^{\infty} _{n=1} {q_{i_n,n}}=0.\\
\end{aligned}
\right.
$$

\begin{definition}
An expansion of a number  $x$ from $[0;1)$ by the following positive series 
\begin{equation}
\label{eq: tilde Q-representation1}
a_{i_1(x),1}+\sum^{\infty} _{n=2} {\left[a_{i_n(x),n}\prod^{n-1} _{j=1} {q_{i_j(x),j}} \right]},
\end{equation}
where
$$
a_{i_n,n}=\begin{cases}
\sum^{i_n-1} _{i=0} {q_{i,n}},&\text{if $i_n \ne 0$,}\\
0,&\text{if $i_n=0$,}
\end{cases}
$$
is called  \emph{a $\tilde Q$-expansion}  \emph{of a number $x$} ~{\cite[p.~89]{Pra98}}.

A defining of an arbitrary number $x\in[0;1)$ by the expansion  \eqref{eq: tilde Q-representation1} is denoted by  $x=\Delta^{\tilde Q} _{i_1i_2...i_n...}$ and the last-mentioned notation is called  \emph{a $\tilde Q$-representation of $x$.}
\end{definition}

\begin{definition}
A number  $x \in [0;1)$ which has  period (0) in its $\tilde Q$-representation   is called   {a $\tilde Q$-rational number}, i. e.
$\Delta^{\tilde Q} _{i_1i_2...i_{n-1}i_n(0)}$. The other numbers from $[0;1]$ are called   {$\tilde Q$-irrational numbers}.
\end{definition}

\begin{proposition}
If $m_n<\infty$ for all $n> n_0$, where $n_0$ is a fixed integer, then each $\tilde Q$-rational number has two different $\tilde Q$-representations, i. e. 
$$
\Delta^{\tilde Q} _{i_1i_2...i_{n-1}i_n(0)}=\Delta^{\tilde Q} _{i_1i_2...i_{n-1}[i_n-1]m_{n+1}m_{n+2}...}.
$$
\end{proposition}

A $\tilde Q$-representation of real numbers is:
\begin{itemize}
\item the $Q^{*}$-representation (or  $Q^{*} _{s}$-representation), when $m_n=s-1$ for all  $n\in\mathbb N$, where  $\mathbb N\ni s=const>1$;
\item the $Q$-representation (or  $Q_{s}$-representation), when $m_n=s-1$ for arbitrary  $n\in\mathbb N$, where $\mathbb N\ni s=const>1$ and $q_{i,n}=q_i$ for all $n\in\mathbb N$;
\item the $Q^{*} _{\infty}$-representation, if  $m_n=\infty$ for any  $n\in\mathbb N$;
\item the $Q_{\infty}$-representation, if  $m_n=\infty$ and  $q_{i,n}=q_i$ for all  $n\in\mathbb N$;
\item  a representation by the positive Cantor series, if  $m_n=d_n-1$ and $q_{i,n}=\frac{1}{d_n}$, $i=\overline{0,d_n-1}$, for any   $n\in\mathbb N$, where $(d_n)$ is a fixed sequence of  positive integers $d_n>1$;
\item an  s-adic representation, if $m_n=s-1$ and $q_{i,n}=q_{i}=\frac{1}{s}$ for all  $n\in\mathbb N$, where $s>1$ is a fixed positive integer. 
\end{itemize}

\begin{definition} The mapping is defined by
$$
\hat \varphi(x)=\hat \varphi \left(\Delta^{\tilde Q} _{i_1i_2...i_n...}\right)=a_{i_2,2}+\sum^{\infty} _{k=3}{\left[a_{i_k,k}\prod^{k-1} _{j=2}{q_{i_j,j}}\right]}
$$
is called \emph{the shift operator $\hat \varphi$ of the $\tilde Q$-expansion of a number $x$}. 

The following mapping
$$
\hat \varphi^k(x)=a_{i_{k+1},k+1}+\sum^{\infty} _{n=k+2}{\left[a_{i_n,n}\prod^{n-1} _{j=k+1}{q_{i_j,j}}\right]}
$$
is called \emph{the shift operator of rank $k$ of the $\tilde Q$-expansion of a number $x$}.
\end{definition}

The last-mentioned and the following definitions of $\hat \varphi^k$ are equivalents. 
\begin{definition}
Let $\left(\tilde Q_k\right)$  be a sequence of the following matrixes  $\tilde Q_k$:
$$
\tilde Q_k=
\begin{pmatrix}
q_{0,k+1}& q_{0,k+2} &\ldots & q_{0,k+j}&\ldots\\
q_{1,k+1}&  q_{1,k+2} &\ldots & q_{1,k+2}&\ldots\\
\vdots& \vdots &\ddots & \vdots&\ldots\\
q_{m_{k+1}-1,k+1}& q_{m_{k+2}-2,k+2} &\ldots & q_{m_{k+j},k+j} &\ldots\\
q_{m_{k+1},k+1}  &    q_{m_{k+2}-1,k+2}        &...&                        & ...\\
                         &  q_{m_{k+2},k+2}                       & ...&                & ...
\end{pmatrix},
$$
 where $k=0,1,2,...,$ $j=1,2,3,...$.

Let $\left(\mathcal F^{\tilde Q_k} _{[0;1)}\right)$ be a sequence of sets $\mathcal F^{\tilde Q_k} _{[0;1)}$ of all possible   $\tilde Q_k$-representations, that generated by the matrix  $\tilde Q_k$, of numbers from   $[0;1)$, where $\tilde Q_0\equiv \tilde Q$.

 The  mapping  $\pi\left(\hat \varphi^k (x,\tilde Q)\right)$ such that
$$
\varphi^k: [0;1)\times \tilde Q \rightarrow [0;1)\times \tilde Q_k
 $$
$$
((i_1,i_2,...,i_k,...),\tilde Q ) \rightarrow ((i_{k+1},i_{k+2},i_{k+j},...),\tilde Q_k),
$$ 
$$
\pi: [0;1)\times \tilde Q_k \rightarrow [0;1) 
$$
$$
(x^{'},\tilde Q_k) \rightarrow x^{'}.
$$
is called \emph{the shift  operator $\hat \varphi^k$ of rank $k$ of the $\tilde Q$-representation of a number  $x\in~[0;1)$}.
\end{definition}

\begin{remark}
For compactness of the presentation paper  we shall use the notation $\hat \varphi^k$ instead  the notation $\pi\left(\hat \varphi^k (x,\tilde Q)\right)$ hereinafter. 
\end{remark}

Since  $x=a_{i_1,1}+q_{i_1,1}\hat \varphi(x)$, 
$$
\hat \varphi(x)=\frac{x-a_{i_1,1}}{q_{i_1,1}}.
$$

It is easy to see that the following expressions are true. 
$$
\hat \varphi^k(x)=\frac{1}{q_{0,1}q_{0,2}...q_{0,k}}\Delta^{\tilde Q} _{\underbrace{0...0}_{k}i_{k+1}i_{k+2}...},
$$
\begin{equation}
\label{shift operator-tilde Q}
\hat \varphi^{k-1}(x)=a_{i_k,k}+q_{i_k,k}\hat \varphi^k(x)=\frac{1}{q_{0,1}q_{0,2}...q_{0,k-1}}\Delta^{\tilde Q} _{\underbrace{0...0}_{k-1}i_{k}i_{k+1}...}.
\end{equation}

\begin{remark*}
Hereinafter at study we shall consider that $m_n<\infty$ for all $n\in~\mathbb N$.
\end{remark*}

\section{Nega-$\tilde Q$-representation of real numbers from $[0;1]$}

\begin{theorem}{\cite{S. Serbenyuk nega-tilde Q-representation, {S. Serbenyuk abstract 11}}.}
For arbitrary $x\in [0;1)$ there exists a sequence $(i_n)$,  $i_n\in N^{0} _{m_n}$, such that  
\begin{equation}
\label{def: nega-tilde Q 3}
x= \sum^{i_1-1} _{i=0}{q_{i,1}}+\sum^{\infty} _{n=2}{\left[(-1)^{n-1}\tilde \delta_{i_n,n}\prod^{n-1} _{j=1}{\tilde q_{i_j,j}}\right]}+\sum^{\infty} _{n=1}{\left(\prod^{2n-1} _{j=1}{\tilde q_{i_j,j}}\right)},
\end{equation}
where
$$
\tilde \delta_{i_{n},n}=\begin{cases}
1,&\text{if $n$ is an even number  and $i_{n}=m_{n}$;}\\
\sum^{m_{n}} _{i=m_{n}-i_{n}} {q_{i,n}},&\text{if $n$ is an even number  and $i_{n}\ne m_{n}$;}\\
0,&\text{if $n$ is an odd number  and $i_{n}=0$;}\\
\sum^{i_n-1} _{i=0}{q_{i,n}},&\text{if $n$ is an odd number  and $i_{n}\ne0$,}\\
\end{cases}
$$
and the first sum in the expression  \eqref{def: nega-tilde Q 3} equals to $0$, if  $i_1=0$.  
\end{theorem}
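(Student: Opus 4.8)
The plan is to derive the expansion \eqref{def: nega-tilde Q 3} from the ordinary $\tilde Q$-representation of $x$, whose existence on $[0;1)$ is secured by conditions $1^{\circ}$--$3^{\circ}$: positivity $1^{\circ}$ together with the normalization $2^{\circ}$ makes the rank-$n$ cylinders cover $[0;1)$, while $3^{\circ}$ forces their lengths $\prod_{k=1}^{n}q_{i_k,k}$ to tend to $0$, so every $x\in[0;1)$ has a $\tilde Q$-representation $x=\Delta^{\tilde Q}_{j_1j_2\ldots}$. By the very definition of the nega-$\tilde Q$-representation, reflecting the even-ranked digits, $\Delta^{-\tilde Q}_{i_1i_2\ldots}=\Delta^{\tilde Q}_{i_1[m_2-i_2]i_3[m_4-i_4]\ldots}$, it suffices to put $i_n:=j_n$ for odd $n$ and $i_n:=m_n-j_n$ for even $n$; since $t\mapsto m_n-t$ is a bijection of $N^{0}_{m_n}$, this yields a genuine digit string $(i_n)$ with $x=\Delta^{-\tilde Q}_{i_1i_2\ldots}$. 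Writing $\tilde q_{i_k,k}:=q_{j_k,k}$, i.e. $\tilde q_{i_k,k}=q_{i_k,k}$ for odd $k$ and $\tilde q_{i_k,k}=q_{m_k-i_k,k}$ for even $k$, every product in the substituted series is exactly a product $\prod\tilde q_{i_k,k}$ of the claim.

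First I would substitute the reflected digits into $x=a_{j_1,1}+\sum_{n=2}^{\infty}a_{j_n,n}\prod_{k=1}^{n-1}q_{j_k,k}$ and separate the ranks by parity. For odd $n$ one has $a_{j_n,n}=a_{i_n,n}$, which is $0$ if $i_n=0$ and $\sum_{i=0}^{i_n-1}q_{i,n}$ otherwise; this is precisely $\tilde\delta_{i_n,n}$, and since $(-1)^{n-1}=1$ the term already has the required shape. The decisive step is the even-rank identity, which is where the normalization $2^{\circ}$ enters:
$$
a_{m_n-i_n,n}=\sum_{i=0}^{m_n-i_n-1}q_{i,n}=1-\sum_{i=m_n-i_n}^{m_n}q_{i,n}=1-\tilde\delta_{i_n,n},
$$
valid also at the boundary $i_n=m_n$, where $a_{0,n}=0=1-1$. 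Hence for even $n$ the summand splits,
$$
a_{j_n,n}\prod_{k=1}^{n-1}q_{j_k,k}=\prod_{k=1}^{n-1}\tilde q_{i_k,k}-\tilde\delta_{i_n,n}\prod_{k=1}^{n-1}\tilde q_{i_k,k},
$$
and because $(-1)^{n-1}=-1$ the second piece is again $(-1)^{n-1}\tilde\delta_{i_n,n}\prod_{k=1}^{n-1}\tilde q_{i_k,k}$, while the first is a pure \emph{correction} term of odd product length $n-1$.

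Finally I would assemble the pieces. Collecting the signed $\tilde\delta$-terms over all $n\ge2$ gives the middle sum, and collecting the correction terms, which arise only for even $n=2m$ and then carry the product $\prod_{k=1}^{2m-1}\tilde q_{i_k,k}$, gives the third sum $\sum_{n=1}^{\infty}\prod_{j=1}^{2n-1}\tilde q_{i_j,j}$; the leading term $a_{i_1,1}=\sum_{i=0}^{i_1-1}q_{i,1}$ produces the first sum and, by the empty-sum convention, vanishes exactly when $i_1=0$, which is the closing remark of the statement. To make this rigorous I would carry out the regrouping at the level of partial sums truncated at an even rank $2N$, where it is a finite and hence legitimate rearrangement, and then let $N\to\infty$, using $3^{\circ}$ and the cylinder estimate $0\le x-S_{2N}<\prod_{k=1}^{2N}q_{j_k,k}\to0$ to pass to the limit. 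The main obstacle is precisely this passage to the limit, i.e. the justification that the rearranged series reproduce $x$: the even-rank terms must be tracked so that the unit part of each $1-\tilde\delta_{i_n,n}$ feeds exactly the odd-length products of the third sum, after which the three displayed sums reassemble to $x=\Delta^{-\tilde Q}_{i_1i_2\ldots}$ and the theorem follows.
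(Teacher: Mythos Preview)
Your argument is correct, and it is precisely the mechanism the paper relies on: the theorem is quoted here without proof (it is cited from \cite{S. Serbenyuk nega-tilde Q-representation,{S. Serbenyuk abstract 11}}), but immediately afterwards the paper records the identification $\Delta^{-\tilde Q}_{i_1i_2\ldots}\equiv\Delta^{\tilde Q}_{i_1[m_2-i_2]i_3[m_4-i_4]\ldots}$ and the equivalent form $x=a_{i_1,1}+\sum_{n\ge2}\tilde a_{i_n,n}\prod_{j<n}\tilde q_{i_j,j}$, which is exactly your starting point. Your even-rank identity $a_{m_n-i_n,n}=1-\tilde\delta_{i_n,n}$ and the ensuing split into the alternating $\tilde\delta$-sum plus the odd-length correction products $\prod_{j=1}^{2m-1}\tilde q_{i_j,j}$ is just the algebra that turns that formula into \eqref{def: nega-tilde Q 3}, so nothing is missing.
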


\begin{definition}
An expansion of a number  $x$ by the series \eqref{def: nega-tilde Q 3} is called  \emph{a nega-$\tilde Q$-expansion of  $x$} and denotes by  $\Delta^{-\tilde Q} _{i_1i_2...i_n...}$. The last-mentioned notation is called a  \emph{nega-$\tilde Q$-representation of a number  $x$} \cite{S. Serbenyuk nega-tilde Q-representation}.
\end{definition}

Numbers from a some countable subset of $[0;1]$ have two different nega-$\tilde Q$-representations, i. e. 
$$
\Delta^{-\tilde Q} _{i_1i_2...i_{n-1}i_nm_{n+1}0m_{n+3}0m_{n+5}...}=\Delta^{-\tilde Q} _{i_1i_2...i_{n-1}[i_n-1]0m_{n+2}0m_{n+4}...}, ~i_n\ne 0.
$$
These numbers are called  \emph{nega-$\tilde Q$-rationals} and the rest of the numbers from $[0;1]$ are called  \emph{nega-$\tilde Q$-irrationals}.

\begin{definition}
A set of all numbers from $[0;1]$ such that the first $n$ digits $i_1,i_2,...,i_n$ of the nega-$\tilde Q$-representation of the numbers are equal to   $c_1,c_2,...,c_n$ respectively  is called  \emph{a cylinder $\Delta^{-\tilde Q} _{c_1c_2...c_n}$ of rank   $n$ with the base $c_1c_2...c_n$}. That is
$$
\Delta^{-\tilde Q} _{c_1c_2...c_n}\equiv\{x: [0;1]\ni x=\Delta^{-\tilde Q} _{c_1c_2...c_ni_{n+1}i_{n+2}...i_{n+k}...}, i_{n+k}\in N^0 _{m_{n+k}}, k\in \mathbb N\},
$$
where  $c_1,c_2,...,c_n$ is a fixed  tuple of symbols from  $N^{0} _{m_1}, N^{0} _{m_2},..., N^{0} _{m_n}$ respectively. 
\end{definition}

Since \cite{S. Serbenyuk nega-tilde Q-representation}, the following statements are true.

\begin{lemma}
Cylinders $\Delta^{-\tilde Q} _{c_1c_2...c_n}$  have the following properties:
\begin{enumerate}
\item a cylinder  $\Delta^{-\tilde Q} _{c_1c_2...c_n}$ is the following closed interval:
$$
\Delta^{-\tilde Q} _{c_1c_2...c_n}=\left[\Delta^{-\tilde Q} _{c_1c_2...c_nm_{n+1}0m_{n+3}0m_{n+5}...};\Delta^{-\tilde Q} _{c_1c_2...c_n0m_{n+2}0m_{n+4}0m_{n+6}...}\right], \text{if $n$ is an odd},  
$$
$$
\Delta^{-\tilde Q} _{c_1c_2...c_n}=\left[\Delta^{-\tilde Q} _{c_1c_2...c_n0m_{n+2}0m_{n+4}0m_{n+6}...};\Delta^{-\tilde Q} _{c_1c_2...c_nm_{n+1}0m_{n+3}0m_{n+5}...}\right], \text{if $n$ is an even};
$$
\item for any  $n \in \mathbb N$
$$
\Delta^{-\tilde Q} _{c_1c_2...c_nc}\subset \Delta^{-\tilde Q} _{c_1c_2...c_n};
$$
\item for all $n \in \mathbb N$
$$
\Delta^{-\tilde Q} _{c_1c_2...c_n}=\bigcup^{m_{n+1}} _{c=0} {\Delta^{-\tilde Q} _{c_1c_2...c_nc}};
$$
\item
$$
\sup\Delta^{-\tilde Q} _{c_1c_2...c_{n-1}c}=\inf\Delta^{-\tilde Q} _{c_1c_2...c_{n-1}[c+1]}, \text{if $n$ is an odd}, 
$$
$$
\sup \Delta^{-\tilde Q} _{c_1c_2...c_{n-1}[c+1]}=\inf \Delta^{-\tilde Q} _{c_1c_2...c_{n-1}c}, \text{if  $n$ is an even};
$$
\item for any  $n \in \mathbb N$
$$
|\Delta^{-\tilde Q} _{c_1c_2...c_n}|=\prod^{n} _{j=1}{\tilde q_{c_j,j}};
$$
\item for arbitrary  $x \in [0;1]$
$$
\bigcap^{\infty} _{n=1}{\Delta^{-\tilde Q} _{c_1c_2...c_n}}=x=\Delta^{-\tilde Q} _{c_1c_2...c_n...}.
$$
\end{enumerate}
\end{lemma}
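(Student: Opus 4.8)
\emph{Proof proposal.} The plan is to reduce every item to the already-established correspondence between the nega-$\tilde Q$- and $\tilde Q$-expansions, namely $\Delta^{-\tilde Q}_{i_1i_2i_3\ldots}=\Delta^{\tilde Q}_{i_1[m_2-i_2]i_3[m_4-i_4]\ldots}$, together with the standard geometry of $\tilde Q$-cylinders from \cite{Pra98}. For a fixed base $c_1c_2\ldots c_n$ I would introduce the flipped string $b_j:=c_j$ for odd $j$ and $b_j:=m_j-c_j$ for even $j$, and observe that fixing the first $n$ nega-$\tilde Q$-digits of $x$ is, via the flip $i_{2k}\mapsto m_{2k}-i_{2k}$, equivalent to fixing the first $n$ $\tilde Q$-digits; since this flip is a bijection of $N^0_{m_j}$ onto itself at every even level, the tails remain in bijection too. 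Hence, as subsets of $[0;1]$,
$$\Delta^{-\tilde Q}_{c_1c_2\ldots c_n}=\Delta^{\tilde Q}_{b_1b_2\ldots b_n},$$
and it suffices to transcribe the known facts about the right-hand cylinder into nega-$\tilde Q$-notation.

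This immediately yields item (5): the flip preserves length, so $|\Delta^{-\tilde Q}_{c_1\ldots c_n}|=|\Delta^{\tilde Q}_{b_1\ldots b_n}|=\prod_{j=1}^{n}q_{b_j,j}=\prod_{j=1}^{n}\tilde q_{c_j,j}$. For item (1) I would use that a $\tilde Q$-cylinder is the closed interval with minimum $\Delta^{\tilde Q}_{b_1\ldots b_n000\ldots}$ and maximum $\Delta^{\tilde Q}_{b_1\ldots b_nm_{n+1}m_{n+2}\ldots}$. Translating the all-zero $\tilde Q$-tail back through the flip produces the nega-$\tilde Q$-tail in which odd positions carry $0$ and even positions carry $m$, while the all-maximal $\tilde Q$-tail produces the complementary nega-string; the parity of $n$, which fixes the parity of the first free index $n+1$, then decides which of the two strings realizes the left and which the right endpoint, yielding exactly the two cases of item (1). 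In particular, because the $\tilde Q$-cylinder is a genuine closed interval and the two sets coincide, $\Delta^{-\tilde Q}_{c_1\ldots c_n}$ is a closed interval as well.

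The remaining assertions are bookkeeping. Items (2) and (3) are set-theoretic: refining the base by one digit is intersection with a finer $\tilde Q$-cylinder (nesting), and letting the $(n{+}1)$-st digit range over $N^0_{m_{n+1}}$ exhausts $\Delta^{-\tilde Q}_{c_1\ldots c_n}$ (decomposition), both inherited from the $\tilde Q$-system. Item (4) is obtained by equating the explicit endpoint formulas of item (1) for two consecutive last digits $c$ and $c+1$; the order-reversal at even levels is precisely what turns left-adjacency (odd $n$) into right-adjacency (even $n$). Finally, item (6) follows from item (5) and condition $3^{\circ}$: the nested closed intervals $\Delta^{-\tilde Q}_{c_1\ldots c_n}$ have lengths $\prod_{j=1}^{n}\tilde q_{c_j,j}$, and since each $\tilde q_{c_j,j}$ equals some $q_{\cdot,j}$, condition $3^{\circ}$ forces these lengths to $0$; the intersection of the decreasing intervals is therefore a single point, which by construction carries the digits $c_1c_2\ldots$ and hence equals $\Delta^{-\tilde Q}_{c_1c_2\ldots c_n\ldots}$.

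The main obstacle is the careful bookkeeping inside item (1): one must correctly match the all-zero and all-maximal $\tilde Q$-tails with their nega-$\tilde Q$-images and track the parity of $n$, since the order-reversing flip at even positions swaps the roles of infimum and supremum. Once this correspondence is pinned down, every other claim is a routine translation or a direct consequence of condition $3^{\circ}$. A fully self-contained alternative, for a reader who prefers not to invoke the $\tilde Q$-geometry, is to prove item (1) straight from the series \eqref{def: nega-tilde Q 3} by checking that $x$ increases in each odd-position digit and decreases in each even-position digit, and then summing the resulting extremal telescoping series; but the bijective route above is shorter and less error-prone.
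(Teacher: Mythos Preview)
Your proposal is correct and is essentially the natural route. In the paper the lemma is not proved in the text at all: it is simply stated with a reference to \cite{S. Serbenyuk nega-tilde Q-representation}, so there is no ``paper's own proof'' to compare with beyond the implicit strategy.

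Your reduction via the identity $\Delta^{-\tilde Q}_{c_1c_2\ldots c_n}=\Delta^{\tilde Q}_{b_1b_2\ldots b_n}$ (with $b_j=c_j$ for odd $j$ and $b_j=m_j-c_j$ for even $j$) is exactly the content of the correspondence the paper records immediately afterwards as Lemma~\ref{lem: nega-tilde Q-representation 1}, so your argument and the paper's intended logic coincide. One small organizational remark: in the paper's ordering that correspondence lemma is stated \emph{after} the cylinder lemma, so if you want your write-up to slot into the paper as-is, either swap the two lemmas or phrase the set equality $\Delta^{-\tilde Q}_{c_1\ldots c_n}=\Delta^{\tilde Q}_{b_1\ldots b_n}$ as a self-contained observation (it follows instantly from the definition of the nega-$\tilde Q$-cylinder and the bijectivity of the level-wise flip) rather than invoking Lemma~\ref{lem: nega-tilde Q-representation 1} forward. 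Your parity bookkeeping in item~(1) checks out, and items~(2)--(6) are indeed routine once the identification with the $\tilde Q$-cylinder is in place.
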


\begin{lemma}
\label{lem: nega-tilde Q-representation 1}
For a nega-$\tilde Q$-representation the following  identities are true: 
$$
\Delta^{-\tilde Q} _{i_1i_2...i_n...}\equiv \Delta^{\tilde Q} _{i_1[m_2-i_2]...i_{2k-1}[m_{2k}-i_{2k}]...},  ~\Delta^{\tilde Q} _{i_1i_2...i_n...}\equiv\Delta^{-\tilde Q} _{i_1[m_2-i_2]...i_{2k-1}[m_{2k}-i_{2k}]...}.
$$
\end{lemma}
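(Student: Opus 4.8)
The plan is to prove the first identity by a direct manipulation of the defining series, and then to deduce the second identity from the first by observing that the digit transformation at even positions is an involution. Fix a symbol sequence $(i_n)$ and set $j_n=i_n$ when $n$ is odd and $j_n=m_n-i_n$ when $n$ is even, so that the right-hand side of the first identity is precisely $\Delta^{\tilde Q}_{j_1j_2\ldots}$. Writing out its defining series,
$$
\Delta^{\tilde Q}_{j_1j_2\ldots}=a_{j_1,1}+\sum^{\infty}_{n=2}\left[a_{j_n,n}\prod^{n-1}_{k=1}q_{j_k,k}\right],
$$
I would first record that $\tilde q_{i_k,k}=q_{j_k,k}$ holds for every $k$ (it equals $q_{i_k,k}$ at odd positions and $q_{m_k-i_k,k}$ at even positions), which is exactly the factor appearing both in the nega-$\tilde Q$ series and in the cylinder-length formula of the preceding lemma.

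The key algebraic step is to relate the $\tilde Q$-coefficients $a_{j_n,n}$ to the nega-coefficients $\tilde\delta_{i_n,n}$ using the normalization $\sum^{m_n}_{i=0}q_{i,n}=1$ (condition $2^{\circ}$). For odd $n$ one has directly $a_{j_n,n}=a_{i_n,n}=\tilde\delta_{i_n,n}$. For even $n$, whether or not $i_n=m_n$, the identity
$$
a_{j_n,n}=a_{m_n-i_n,n}=\sum^{m_n-i_n-1}_{i=0}q_{i,n}=1-\sum^{m_n}_{i=m_n-i_n}q_{i,n}=1-\tilde\delta_{i_n,n}
$$
is what I must verify, the case $i_n=m_n$ forcing both sides to equal $0$. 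This is the only place where the summation convention for $\tilde\delta$ and the normalization of the rows of $\tilde Q$ are genuinely used.

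Substituting these coefficients and splitting each even-index term $(1-\tilde\delta_{i_n,n})\prod^{n-1}_{k=1}\tilde q_{i_k,k}$ into a pure product $\prod^{n-1}_{k=1}\tilde q_{i_k,k}$ minus $\tilde\delta_{i_n,n}\prod^{n-1}_{k=1}\tilde q_{i_k,k}$, I would regroup the outcome into two families. The terms $(-1)^{n-1}\tilde\delta_{i_n,n}\prod^{n-1}_{k=1}\tilde q_{i_k,k}$ (with the correct sign: positive for odd $n$, negative for even $n$) reassemble into the alternating middle sum of the nega-$\tilde Q$ expansion, while the leftover pure products, which occur exactly at even indices $n=2m$, give $\prod^{2m-1}_{k=1}\tilde q_{i_k,k}$ and reassemble into the trailing sum $\sum^{\infty}_{m=1}\prod^{2m-1}_{j=1}\tilde q_{i_j,j}$. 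Together with the matching leading terms $a_{i_1,1}=\sum^{i_1-1}_{i=0}q_{i,1}$, this reproduces verbatim the defining series of $\Delta^{-\tilde Q}_{i_1i_2\ldots}$, giving the first identity.

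The main obstacle is not the bookkeeping but the legitimacy of the regrouping, and I would justify it by observing that the original $\tilde Q$-series has nonnegative terms and converges (to a point of $[0;1)$), while the Theorem defining the nega-$\tilde Q$-representation guarantees that both the alternating sum and the trailing product-sum on the right converge; hence the split-and-regroup operation preserves the value. (Alternatively, one could match the nested cylinders $\Delta^{-\tilde Q}_{c_1\ldots c_n}$ and $\Delta^{\tilde Q}_{c_1[m_2-c_2]\ldots}$ endpoint by endpoint using the cylinder lemma.) Finally, the second identity needs no separate argument: applying the first identity to the sequence $(j_n)$ and using $m_{2k}-(m_{2k}-i_{2k})=i_{2k}$ restores the original even-position digits, so $\Delta^{-\tilde Q}_{j_1j_2\ldots}=\Delta^{\tilde Q}_{i_1i_2\ldots}$, which is exactly the claimed second equality.
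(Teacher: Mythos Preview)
Your argument is correct. The paper does not give a separate proof of this lemma: it states the identity and immediately follows with the reformulation
\[
x=\Delta^{-\tilde Q}_{i_1i_2\ldots}\equiv a_{i_1,1}+\sum_{n=2}^{\infty}\Bigl[\tilde a_{i_n,n}\prod_{j=1}^{n-1}\tilde q_{i_j,j}\Bigr],
\]
treating the equality as an unpacking of definitions. Your proof supplies exactly the computation the paper omits, and the route you take---showing $q_{j_k,k}=\tilde q_{i_k,k}$, deriving $a_{j_n,n}=\tilde\delta_{i_n,n}$ for odd $n$ and $a_{j_n,n}=1-\tilde\delta_{i_n,n}$ for even $n$ from the row normalization $\sum_i q_{i,n}=1$, and then splitting the even-index terms to recover the alternating sum plus the trailing product sum---is the natural one and matches the structure implicit in the paper's reformulation. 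The involution observation for the second identity is also correct and is how the paper uses the lemma throughout (e.g., in the definition of the functional-equation system via $x=\Delta^{-\tilde Q}_{i_1i_2\ldots}\equiv\Delta^{\tilde Q}_{i_1[m_2-i_2]\ldots}$). Your convergence remark is adequate; since all $q_{i,n}>0$ and the $\tilde Q$-series has nonnegative terms bounded by $1$, the split into two absolutely convergent subseries is unproblematic.
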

That is
$$
x=\Delta^{-\tilde Q} _{i_1i_2...i_n...}\equiv a_{i_1,1}+\sum^{\infty} _{n=2}{\left[\tilde a_{i_n,n}\prod^{n-1} _{j=1}{\tilde q_{i_j,j}}\right]},
$$
where
$$
\tilde a_{i_n,n}=\begin{cases}
a_{i_n,n},&\text{if $n$ is an odd;}\\
a_{m_n-i_n,n},&\text{if $n$ is an even,}
\end{cases}
$$
$$
\tilde q_{i_n,n}=\begin{cases}
q_{i_n,n},&\text{if $n$ is an odd;}\\
q_{m_n-i_n,n},&\text{if $n$ is an even.}
\end{cases}
$$

\section{The main object of the research is an infinite system of functional equations}

Let we have matrixes of the same dimension $\tilde Q=||q_{i,n}||$ (properties of the last-mentioned matrix were considered earlier) and  $P=||p_{i,n}||$, where  $i=\overline{0,m_n}$, $m_n\in \mathbb N\cup \{0\}$, $n=1,2,...$, and for elements  $p_{i,n}$ of $P$ the following system of conditions is true: 

$$
\left\{
\begin{aligned}
\label{eq: tilde Q 1}
1^{\circ}.~~~~~~~~~~~~~~~~~~~~~~~~~~ ~~~~~ p_{i,n}\in (-1;1);\\
2^{\circ}.  ~~~~~~~~~~~~~~~~~~~~~\forall n \in \mathbb N: \sum^{m_n}_{i=0} {p_{i,n}}=1;\\
3^{\circ}.  ~~~~\forall (i_n), i_n \in \mathbb N \cup \{0\}: \prod^{\infty} _{n=1} {|p_{i_n,n}|}=0;\\
4^{\circ}.~~~~~~~~~~~~~~~\forall  i_n \in \mathbb N: 0<\sum^{i_n-1} _{i=0} {p_{i,n}}<1.\\
\end{aligned}
\right.
$$

Consider an infinite system of functional equations 

\begin{equation}
\label{def: function eq. system - nega-tilde Q 1}
f\left(\hat \varphi^k(x)\right)=\tilde \beta_{i_{k+1},k+1}+\tilde p_{i_{k+1},k+1}f\left(\hat \varphi^{k+1}(x)\right),
\end{equation}
where $k=0,1,...$, $\hat \varphi$ is a shift operator of a  $\tilde Q$-expansion, $x=\Delta^{-\tilde Q} _{i_1(x)i_2(x)...i_n(x)...}\equiv \ \equiv \Delta^{\tilde Q} _{i_1(x)[m_2-i_2(x)]i_3...i_{2k+1}(x)[m_{2k+2}-i_{2k+2}(x)]...}$,
$$
\tilde p_{i_n,n}=\begin{cases}
p_{i_n,n},&\text{if $n$ is an odd;}\\
p_{m_n-i_n,n},&\text{if $n$ is an even, }
\end{cases}
$$
$$
\beta_{i_n,n}=\begin{cases}
\sum^{i_n-1} _{i=0} {p_{i,n}}>0,&\text{if $i_n \ne 0$;}\\
0,&\text{if $i_n=0$,}
\end{cases}~~~
\tilde \beta_{i_n,n}=\begin{cases}
\beta_{i_n,n},&\text{if $n$ is an odd;}\\
\beta_{m_n-i_n,n},&\text{if $n$ is an even. }
\end{cases}
$$

Since the equality \eqref{shift operator-tilde Q}  is true,  one can to write the system \eqref{def: function eq. system - nega-tilde Q 1} by 
\begin{equation}
\label{def: function eq. system - nega-tilde Q 2}
f\left(\tilde a_{i_k,k}+\tilde q_{i_k,k}\hat \varphi^k(x)\right)=\tilde \beta_{i_{k},k}+\tilde p_{i_{k},k}f\left(\hat \varphi^{k}(x)\right),
\end{equation}
where $k=1,2,...$, $i\in N^0 _{m_k}$.

\begin{lemma} 
The function  
\begin{equation}
\label{def: function - nega-tilde Q}
F(x)=\beta_{i_1(x),1}+\sum^{\infty} _{k=2}{\left[\tilde \beta_{i_k(x),k}\prod^{k-1} _{j=1}{\tilde p_{i_j(x),j}}\right]}
\end{equation}
  is a well-defined function at an arbitrary point from  $[0;1]$. 
\end{lemma}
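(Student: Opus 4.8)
The plan is to work with the finite partial sums
$$
S_n(x)=\beta_{i_1(x),1}+\sum_{k=2}^{n}\tilde\beta_{i_k(x),k}\prod_{j=1}^{k-1}\tilde p_{i_j(x),j},
$$
and to prove three things in turn: that each $S_n(x)$ stays inside $[0;1]$, that $(S_n(x))_n$ is a Cauchy sequence (so the series \eqref{def: function - nega-tilde Q} converges and $F$ is defined on all of $[0;1]$), and finally that the limit satisfies \eqref{def: function eq. system - nega-tilde Q 1}. The starting observation is that $S_n(x)$ is exactly the composition $\varphi_{i_1,1}\circ\varphi_{i_2,2}\circ\dots\circ\varphi_{i_n,n}(0)$ of the affine maps $\varphi_{i,k}\colon y\mapsto\tilde\beta_{i,k}+\tilde p_{i,k}y$; this is checked by expanding the composition from the inside out, the innermost value being $\varphi_{i_n,n}(0)=\tilde\beta_{i_n,n}$, and using that $\tilde\beta_{i_1,1}=\beta_{i_1,1}$, $\tilde p_{i_1,1}=p_{i_1,1}$ since $n=1$ is odd.

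The central step, which I expect to be the main obstacle, is to show that every $\varphi_{i,k}$ maps $[0;1]$ into itself, so that $[0;1]$ is forward invariant and all partial sums (as well as all their shifted analogues) land in $[0;1]$. Because $\varphi_{i,k}$ is affine, $\varphi_{i,k}([0;1])$ is the interval with endpoints $\varphi_{i,k}(0)=\tilde\beta_{i,k}$ and $\varphi_{i,k}(1)=\tilde\beta_{i,k}+\tilde p_{i,k}$. By the definitions of $\tilde\beta$, $\tilde p$ and the cumulative relation $\beta_{j,k}+p_{j,k}=\beta_{j+1,k}$, both endpoints are consecutive cumulative sums $\beta_{j,k}$ and $\beta_{j+1,k}$, with $j$ equal to $i_k$ or to $m_k-i_k$ according to the parity of $k$. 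Conditions $2^{\circ}$ and $4^{\circ}$ force $\beta_{0,k}=0$, $\beta_{m_k+1,k}=1$ and $\beta_{j,k}\in(0;1)$ for $1\le j\le m_k$, so both endpoints lie in $[0;1]$ and hence $\varphi_{i,k}([0;1])\subseteq[0;1]$. The delicacy here is precisely that the $p_{i,n}$ may be negative and the maps need not be monotone, so boundedness cannot be obtained from any naive estimate on $|p_{i,n}|$ (which may approach $1$) and must instead be squeezed out of the cumulative positivity in $4^{\circ}$.

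Granting invariance, convergence is immediate. For integers $r\ge 1$ one factors the first $n$ multipliers out of the tail,
$$
S_{n+r}(x)-S_n(x)=\left(\prod_{j=1}^{n}\tilde p_{i_j(x),j}\right)\cdot S^{(n)}_{r}\bigl(\hat\varphi^{n}(x)\bigr),
$$
where $S^{(n)}_{r}$ is the partial sum built from positions $n+1,\dots,n+r$ (i.e.\ from the shifted matrices $\tilde Q_n$, $P_n$). By the invariance just proved $S^{(n)}_{r}(\hat\varphi^{n}(x))\in[0;1]$, whence $|S_{n+r}(x)-S_n(x)|\le\prod_{j=1}^{n}|\tilde p_{i_j(x),j}|$, and condition $3^{\circ}$ makes this product tend to $0$. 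Thus $(S_n(x))_n$ is Cauchy and $F(x)=\lim_n S_n(x)\in[0;1]$ exists for every $x\in[0;1)$, which (with the evident boundary value) defines $F$ on all of $[0;1]$. It remains to check that at the nega-$\tilde Q$-rational points, where $x$ admits two representations, the two series agree; since $m_n<\infty$ this is a finite computation that telescopes using $2^{\circ}$ (namely $\sum_i p_{i,n}=1$), mirroring the identification of the two symbolic expansions, and makes $F$ a genuine function of the point $x$.

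Finally, to see that $F$ solves \eqref{def: function eq. system - nega-tilde Q 1}, I would split off the leading term, $F(x)=\beta_{i_1,1}+\tilde p_{i_1,1}\bigl(\tilde\beta_{i_2,2}+\tilde p_{i_2,2}\tilde\beta_{i_3,3}+\dots\bigr)$, and recognise the bracketed series as $F(\hat\varphi(x))$ read in the shifted system $(\tilde Q_1,P_1)$; together with \eqref{shift operator-tilde Q} this is exactly the $k=0$ instance of \eqref{def: function eq. system - nega-tilde Q 2}, and the general $k$ follows by applying the same factorization to $\hat\varphi^{k}(x)$. The only point needing care is the bookkeeping of the parity-dependent reflections $m_n-i_n$ in $\tilde\beta$ and $\tilde p$ under the index shift; this is handled by passing to the shifted matrices $\tilde Q_k$ as in the definition of $\hat\varphi^k$ and by invoking Lemma \ref{lem: nega-tilde Q-representation 1} to keep the nega-$\tilde Q$ and $\tilde Q$ symbol conventions aligned.
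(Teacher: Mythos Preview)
Your argument is correct and, for the convergence step, follows a genuinely different route from the paper. The paper handles convergence in a single line, asserting that the defining series is dominated by its absolute-value companion $\beta_{i_1,1}+\sum_{k\ge 2}\tilde\beta_{i_k,k}\prod_{j<k}|\tilde p_{i_j,j}|$ and that the latter converges ``by analogy'' with the $\tilde Q$-expansion~\eqref{eq: tilde Q-representation1}; the bulk of its proof is then the explicit telescoping computation at nega-$\tilde Q$-rational points, which is exactly the check you sketch. You instead recognise $S_n(x)$ as the composition $\varphi_{i_1,1}\circ\cdots\circ\varphi_{i_n,n}(0)$ and use condition~$4^{\circ}$ (the positivity of the cumulative sums $\beta_{j,k}$) to show that each affine map $\varphi_{i,k}$ preserves $[0;1]$; this yields the Cauchy bound $|S_{n+r}-S_n|\le\prod_{j\le n}|\tilde p_{i_j,j}|\to 0$ from~$3^{\circ}$, with $F(x)\in[0;1]$ as a free byproduct. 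The IFS-invariance argument is in fact the more robust of the two: when some $p_{i,n}<0$ one may well have $\tilde\beta_{i,k}+|\tilde p_{i,k}|>1$, and then the absolute-value series is not controlled by conditions $1^{\circ}$--$4^{\circ}$ alone, so the paper's comparison is not fully justified in general, whereas your estimate is. Your final paragraph, verifying that $F$ satisfies~\eqref{def: function eq. system - nega-tilde Q 1}, goes beyond the present lemma; in the paper that is the content of the theorem stated immediately afterwards, proved there by precisely the peel-off-the-first-term recursion you describe.
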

\begin{proof} Let  $x$ be a nega-$\tilde Q$-irrational number. The series \eqref{def: function - nega-tilde Q} is an absolutely convergent. The last-mentioned statement follows from conditions 
 $\tilde\beta_{i_k(x),k}\in [0;1)$, $|\tilde p_{i_k(x),k}|<1$ and a convergence of the series  
$$
\beta_{i_1(x),1}+\sum^{\infty} _{k=2}{\left[\tilde \beta_{i_k(x),k}\prod^{k-1} _{j=1}{|\tilde p_{i_j(x),j}|}\right]}.
$$
The convergence is proved by analogy with a proof of a convergence of the series \eqref{eq: tilde Q-representation1}, when 
$\tilde p_{i_k(x),k}\ne 0$ for all $k\in\mathbb N$.

Let   $x$ be a nega-$\tilde Q$-rational number. Consider the 	
difference  
$$
\Delta=F(\Delta^{-\tilde Q} _{i_1i_2...i_{n-1}i_nm_{n+1}0m_{n+3}0m_{n+5}...})-F(\Delta^{-\tilde Q} _{i_1i_2...i_{n-1}[i_n-1]0m_{n+2}0m_{n+4}...}).
$$

Let $n$ be an even. Then
$$
\Delta=\left(\prod^{n-1} _{j=1} {\tilde{p}_{\varepsilon_j,j}}\right)\times
$$
$$
\times \left( \beta_{m_n-i_n,n}-\beta_{m_n-i_n+1,n}+p_{m_n-i_n,n}\left (\beta_{m_{n+1},n+1}+\sum^{\infty} _{l=2}{\beta_{m_{n+l},n+l}\left[\prod^{n+l-1} _{j=n+1}{p_{m_j,j}}\right]}\right)\right)=
$$
$$
=\left(\prod^{n-1} _{j=1} {\tilde{p}_{\varepsilon_j,j}}\right)\cdot(-p_{m_n-i_n,n}+p_{m_n-i_n,n})=0.
$$

If $n$ is an odd, then
$$
\Delta=\left(\prod^{n-1} _{j=1} {\tilde{p}_{\varepsilon_j,j}}\right)\left( \beta_{i_n,n}-\beta_{i_n-1,n}-p_{i_n-1,n}\left (\beta_{m_{n+1},n+1}+\sum^{\infty} _{l=2}{\beta_{m_{n+l},n+l}\left[\prod^{n+l-1} _{j=n+1}{p_{m_j,j}}\right]}\right)\right)=0.
$$
\end{proof}
\begin{theorem}
The infinite system \eqref{def: function eq. system - nega-tilde Q 1}  of functional equations has the unique solution 
$$
f(x)=\beta_{i_1(x),1}+\sum^{\infty} _{k=2}{\left[\tilde \beta_{i_k(x),k}\prod^{k-1} _{j=1}{\tilde p_{i_j(x),j}}\right]}
$$
in the class of determined and bounded on $[0;1]$ functions.
\end{theorem}
\begin{proof} Really, for arbitrary $ x=\Delta^{-\tilde Q} _{i_1(x)i_2(x)...i_k(x)...}$ from $[0;1]$
$$
f(x)=\beta_{i_1(x),1}+p_{i_1(x),1}f\left(\hat \varphi(x)\right)=\beta_{i_1(x),1}+p_{i_1(x),1}\left(\beta_{m_2-i_2(x),2}+p_{m_2-i_2(x),2}f\left(\hat \varphi^2(x)\right)\right)=
$$
$$
=\beta_{i_1(x),1}+\beta_{m_2-i_2(x),2}p_{i_1(x),1}+p_{i_1(x),1}p_{m_2-i_2(x),2}\left(\beta_{i_3(x),3}+p_{i_3(x),3}f\left(\hat \varphi^3(x)\right)\right)=...=
$$
$$
=\beta_{i_1(x),1}+\tilde\beta_{i_2(x),2}\tilde p_{i_1(x),1}+\tilde\beta_{i_3(x),3}\tilde p_{i_1(x),1}\tilde p_{i_2(x),2}+...+\tilde\beta_{i_k(x),k}\prod^{k-1} _{j=1}{\tilde p_{i_j(x),j}}+\left(\prod^{k} _{j=1}{\tilde p_{i_j(x),j}}\right)f\left(\hat \varphi^k(x)\right).
$$
Since $\hat\varphi^k(x)\in [0;1]$ for arbitraries $x\in [0;1]$ and $k\in Z_0$ and the function  $f$ is determined at all points from  $[0;1]$ and the function is bounded on the segment $[0;1]$ (i. e, there exists $M>0$ such that for any  $x\in [0;1]$: $|f(x)|\le M$) and the  condition 
$$
\prod^{k} _{j=1}{\tilde p_{i_j(x),j}}\le\prod^{k} _{j=1}{|\tilde p_{i_j(x),j}|}\to 0 ~~~(k\to\infty)
$$
holds, it follows that
$$
f\left(\Delta^{-\tilde Q} _{i_1i_2...i_k...}\right)=\lim_{k\to\infty}{\left(\beta_{i_1(x),1}+\sum^{k} _{n=2}{\left[\tilde \beta_{i_n(x),n}\prod^{n-1} _{j=1}{\tilde p_{i_j(x),j}}\right]}+\left(\prod^{k} _{j=1}{\tilde p_{i_j(x),j}}\right)f\left(\hat \varphi^k(x)\right)\right)}=
$$
$$
=\lim_{k\to\infty}{\left(\beta_{i_1(x),1}+\sum^{k} _{n=2}{\left[\tilde \beta_{i_n(x),n}\prod^{n-1} _{j=1}{\tilde p_{i_j(x),j}}\right]}\right)}=\beta_{i_1(x),1}+\sum^{\infty} _{k=2}{\left[\tilde \beta_{i_k(x),k}\prod^{k-1} _{j=1}{\tilde p_{i_j(x),j}}\right]}.
$$
\end{proof}

So, by the system   \eqref{def: function eq. system - nega-tilde Q 1} one can to model  the class $\Lambda_F$  of determined and bounded on [0; 1] functions, where the fixed matrix $P$ determines the unique function $y= F(x)$  such that
$$
x= \sum^{i_1-1} _{i=0}{q_{i,1}}+\sum^{\infty} _{n=2}{\left[(-1)^{n-1}\tilde \delta_{i_n,n}\prod^{n-1} _{j=1}{\tilde q_{i_j,j}}\right]}+\sum^{\infty} _{n=1}{\left(\prod^{2n-1} _{j=1}{\tilde q_{i_j,j}}\right)},
$$
$$
y= F(x)=\sum^{i_1-1} _{i=0}{p_{i,1}}+\sum^{\infty} _{n=2}{\left[(-1)^{n-1}\tilde \zeta_{i_n,n}\prod^{n-1} _{j=1}{\tilde p_{i_j,j}}\right]}+\sum^{\infty} _{n=1}{\left(\prod^{2n-1} _{j=1}{\tilde p_{i_j,j}}\right)},
$$
where
$$
\tilde \zeta_{i_{n},n}=\begin{cases}
1,&\text{if $n$ is an even and  $i_{n}=m_{n}$;}\\
\sum^{m_{n}} _{i=m_{n}-i_{n}} {p_{i,n}},&\text{if $n$ is an even and  $i_{n}\ne m_{n}$;}\\
0,&\text{if $n$ is an odd and   $i_{n}=0$;}\\
\sum^{i_n-1} _{i=0}{p_{i,n}},&\text{if $n$ is an odd and  $i_{n}\ne0$.}\\
\end{cases}
$$

\begin{remark}
An argument of the function $F\in \Lambda_F$ is  determined by nega-$\tilde Q$-representation, but an expansion of the value of the function has only "formal" view of a nega-$P$-representation and is the last  only, if all elements $p_{i,n}$ of the matrix 
$P$ are positive numbers. 
\end{remark}

\section{ Continuity and monotonicity conditions  of a solution of the functional equations system 
 \eqref{def: function eq. system - nega-tilde Q 1}}

\begin{theorem}
The function  $y=F(x)$ is a continuous function on  $[0;1]$.
\end{theorem}
\begin{proof} Let $[0;1]\ni x_0$ be a some number. 
Consider the difference 
$$
F(x)-F(x_0)=\left(\prod^{n_0} _{j=1}{\tilde p_{i_j,j}}\right)\left(\tilde \beta_{i_{n_0+1}(x),n_0+1}-\tilde \beta_{i_{n_0+1}(x_0),n_0+1}\right)+
$$
$$
+\left(\prod^{n_0} _{j=1}{\tilde p_{i_j,j}}\right)\left(\sum^{\infty} _{k=n_0+2}{\left(\tilde \beta_{i_k(x),k}\prod^{k-1} _{l=n_0+1}{\tilde p_{i_l(x),l}}\right)}-\sum^{\infty} _{k=n_0+2}{\left(\tilde \beta_{i_k(x_0),k}\prod^{k-1} _{l=n_0+1}{\tilde p_{i_l(x_0),l}}\right)}\right),
$$
where $i_{n_0+1}(x)\ne i_{n_0+1}(x_0)$, $ i_j(x)=i_j(x_0)$, $j=\overline{1,n_0}$.

Let $x_0$ be a nega-$\tilde Q$-irrational point. Since $F$ is bounded and the conditions  $x\to x_0$, $n_0\to \infty$ are equivalents,  it is easy to see that 
$$
\lim_{x\to x_0}{|F(x)-F(x_0)|}=\lim_{n_0\to \infty}{\left(\prod^{n_0} _{j=1}{\left|\tilde p_{i_j,j}\right|}\right)}=0
$$
So, $\lim_{x\to x_0}{F(x)}=F(x_0)$.

Let $x_0$  be a nega-$\tilde Q$-rational number. Let us denote 
$$
x_0=x^{(1)} _0=\Delta^{-\tilde Q} _{i_1i_2...i_{n-1}[i_n-1]0m_{n+2}0m_{n+4}...}=\Delta^{-\tilde Q} _{i_1i_2...i_{n-1}i_nm_{n+1}0m_{n+3}...}=x^{(2)} _0, ~~~\text{if $n$ is an odd,}
$$
$$
x_0=x^{(1)} _0=\Delta^{-\tilde Q} _{i_1i_2...i_{n-1}i_nm_{n+1}0m_{n+3}...}=\Delta^{-\tilde Q} _{i_1i_2...i_{n-1}[i_n-1]0m_{n+2}0m_{n+4}...}=x^{(2)} _0, ~~~\text{if $n$ is an even,}
$$
and consider the limits 
$$
\lim_{x\to x_0+0}{F(x)}=\lim_{x\to x^{(2)} _0}{F(x)}, ~~~\lim_{x\to x_0-0}{F(x)}=\lim_{x\to x^{(1)} _0}{F(x)}
$$
by  considerations  for the case of a nega-$\tilde Q$-irrational number $x_0$.

Therefore,   $F$ is a continuous function on  $[0;1]$.
\end{proof}

\begin{lemma}
A value of the increment 
$$
\mu_{F} {\left(\Delta^{-\tilde Q} _{c_1c_2...c_n}\right)}=F\left(\sup{\Delta^{-\tilde Q} _{c_1c_2...c_n}}\right)-F\left(\inf{\Delta^{-\tilde Q} _{c_1c_2...c_n}}\right)
$$ 
of the function $F$ on the cylinder $\Delta^{-\tilde Q} _{c_1c_2...c_n}$  is calculated by the formula
\begin{equation}
\label{eq: function nega-tilde Q pryrist}
\mu_{F} {\left(\Delta^{-\tilde Q} _{c_1c_2...c_n}\right)}=\prod^{n} _{j=1}{\tilde p_{i_j,j}}.
\end{equation}
\end{lemma}
\begin{proof} From the definition and properties of cylinders $\Delta^{-\tilde Q} _{c_1c_2...c_n}$  it follows that 
$$
\mu_{F} {\left(\Delta^{-\tilde Q} _{c_1c_2...c_n}\right)}=\begin{cases}
F\left(\Delta^{-\tilde Q} _{c_1c_2...c_n0m_{n+2}0m_{n+4}...}\right)-F\left(\Delta^{-\tilde Q} _{c_1c_2...c_nm_{n+1}0m_{n+3}...}\right),&\text{if $n$ is an odd;}\\
F\left(\Delta^{-\tilde Q} _{c_1c_2...c_nm_{n+1}0m_{n+3}...}\right)-F\left(\Delta^{-\tilde Q} _{c_1c_2...c_n0m_{n+2}0m_{n+4}...}\right),&\text{if $n$ is an even.}
\end{cases}
$$
Therefore,
$$
\mu_{F} {\left(\Delta^{-\tilde Q} _{c_1c_2...c_n}\right)}=\left(\beta_{m_{n+1},n+1}+\beta_{m_{n+2},n+2}p_{m_{n+1},n+1}+\beta_{m_{n+3},n+3}p_{m_{n+1},n+1}p_{m_{n+2},n+2}+...\right)\prod^{n} _{j=1}{\tilde p_{i_j,j}},
$$
The equality  \eqref{eq: function nega-tilde Q pryrist} follows from the last-mentioned expression.
\end{proof}

\begin{theorem}
\label{th: nega-tilde Q function property1}
The function  $y=F(x)$ is:
\begin{itemize}

\item a monotonic non-decreasing function, if elements $p_{i,n}$ of the matrix $P$ are non-negative, and a strictly increasing function, if all elemets of the matrix $P$ are positive numbers;

\item a non-monotonic function, that has at least one monotonicity interval on $[0;1]$, if the matrix $P$  does not have zeros
and there exists only the finite tuple $\{p_{i,n}\}$ of the elements $p_{i,n}<0$ of the matrix $P$;

\item a function, that does not have  monotonicity intervals on $[0;1]$, if the matrix $P$  does not have zeros
and there exists the infinite subsequence $(n_k)$ of sequence of positive integer numbers such that for arbitrary $k \in \mathbb N$ there exists at least one element  $p_{i,n_k}<0$ of $P$, where $i=\overline{0,m_{n_k}}$;

\item a constant almost everywhere on $ [0, 1]$ function, if  there exists the infinite subsequence $(n_k)$ of a sequence of positive integer numbers such that for arbitrary $k \in \mathbb N$ there exists at least one element  $p_{i,n_k}=0$ of the matrix  $P$, where $i=\overline{0,m_{n_k}}$.
\end{itemize}
\end{theorem}
\begin{proof}
\emph{The first} and \emph{the second statements} follow from \eqref{eq: function nega-tilde Q pryrist}.

\emph{The third statement}. Let us choose a some number $n_0$ such that the number $n_0+1$  belongs to a subsequence $(n_k)$ of a positive integers sequence and the tuple $c_1,c_2,...,c_{n_0}$, that  the condition  $\mu_{F} {\left(\Delta^{-\tilde Q} _{c_1c_2...c_{n_0}}\right)}>0$ holds.  It is easy to see that there exist nega-$\tilde Q$-symbols $i_{n_0+1}$ and $i^{'} _{n_0+1}$ such that  $\tilde p_{i_{n_0+1},n_0}<0$, $\tilde p_{i^{'} _{n_0+1},n_0}>0$. In the case for
$$
\Delta^{-\tilde Q} _{c_1c_2...c_{n_0}}\supset \left(\Delta^{-\tilde Q} _{c_1c_2...c_{n_0}i_{n_0+1}}\cup \Delta^{-\tilde Q} _{c_1c_2...c_{n_0}i^{'} _{c_{n_0}+1}}\right),
$$
since the equation \eqref{eq: function nega-tilde Q pryrist}, it follows that 
$$
\mu_{F} {\left(\Delta^{-\tilde Q} _{c_1c_2...c_{n_0}i_{n_0+1}}\right)}<0<\mu_{F} {\left(\Delta^{-\tilde Q} _{c_1c_2...c_{n_0}i^{'} _{n_0+1}}\right)}.
$$

Since the function $F$ is a monotonically increasing  function on a some segment,  the function does not have  intervals of increasing and decreasing on the segment simultaneously. Therefore, the last-mentioned double inequality is a contradiction. So, the function $F$ does not have  monotonicity intervals on $[0;1]$.

Since the equality \eqref{eq: function nega-tilde Q pryrist} holds and the value of the  Lebesgue measure of the set 
$$
C\left[-\tilde Q,\overline{(V_{n_k})}\right]\equiv\left\{x: x=\Delta^{-\tilde Q} _{i_1i_2...i_n...}, i_{n_k}\notin V_{n_k}\right\},
$$
where  $(n_k)$ is a fixed subsequence of a positive integers sequence and   $(V_{n_k})$ is a sequence of the subsets  $V_{n_k}$  of the sets $N^0 _{m_{n_k}}$ of nega-$\tilde Q$-symbols such that
$$
V_{n_k}\equiv\{i: i\in N^0 _{m_{n_k}}, p_{i,n_k}=0\},
$$
equals to zero, \emph{the fourth statement} is true.

\begin{proposition}
\label{pr: set1}
Let we have the set 
$$
C\left[\tilde Q, (N^0 _{m_n}\setminus{\{{i^{*} _n}\})}\right]\equiv\left\{x: x=\Delta^{-\tilde Q} _{i_1i_2...i_n...}, i_n\in N^0 _{m_n}\setminus{\{{i^{*} _n}\}}\right\},
$$
 where $(i^{*} _n)$ is a fixed sequence of  $\tilde Q$-symbols, that $i^{*} _1\in N^0 _{m_1}, i^{*} _2\in N^0 _{m_2},...$.

The following equality
$$
\lambda\left(C\left[\tilde Q, (N^0 _{m_n}\setminus{\{{i^{*} _n}\})}\right]\right)=0
$$
 holds, where $\lambda(\cdot)$ is the Lebesgue measure of a set.
\end{proposition}
\begin{proof}

Let us denote by $V_{0,n}=N^0 _{m_n}\setminus{\{{i^{*} _n}\}}$,
$$
E_n=\bigcup_{{i_1\ne i^{*} _1,..., i_n\ne i^{*} _n}}{\Delta^{-\tilde Q} _{i_1i_2...i_n}},
$$	
at that, $E_n=E_{n+1}\cup\overline{E_{n+1}}$, $C[\tilde Q, (V_{0,n})]=\cap^{\infty} _{n=1}{E_n}$, where
$E_{n+1}\subset E_n$ òà $E_0=[0;1]$.

From the property of continuity of the Lebesgue measure, we obtain that
$$
\lambda\left(C[\tilde Q, (V_{0,n})]\right)=\lim_{n\to\infty}{\lambda(E_n)}=\lim_{n\to\infty}{\prod^{n} _{k=1}{(1-q_{i^{*} _k,k})}}=0,
$$
because
$$
\lambda(E_{n+1})=\sum_{\substack{i_1\ne i^{*} _1,\\
...,\\
i_{n+1}\ne i^{*} _{n+1}}}{\left(q_{i_1,1}...q_{i_{n+1},n+1}\right)}=\lambda(E_{n})- q_{i^{*} _{n+1},n+1}\sum_{\substack{i_1\ne i^{*} _1, \\
...,\\
i_{n}\ne i^{*} _{n}}}{\left(q_{i_1,1}...q_{i_{n},n}\right)}
$$
and 
$$
1=\sum_{i_1\in N^0 _{m_1},...,i_n \in N^0 _{m_n}}{\left(q_{i_1,1}q_{i_2,2}...q_{i_n,n}\right)}=
$$
$$
=\sum_{\substack{i_1\ne i^{*} _1,\\
...,\\
i_{n}\ne i^{*} _{n}}}{\left(q_{i_1,1}...q_{i_{n},n}\right)}+ q_{i^{*} _{n},n}\sum_{\substack{i_1\ne i^{*} _1, \\
...,\\
i_{n-1}\ne i^{*} _{n-1}}}{\left(q_{i_1,1}...q_{i_{n-1},n-1}\right)}+...+q_{i^{*} _1,1}q_{i^{*} _2,2}...q_{i^{*} _n,n}.
$$
So, the proposition \ref{pr: set1} is true. \end{proof}

Since the relation between $\tilde Q$- and nega-$\tilde Q$-representation is given, proofs of equalities 
$\lambda\left(C\left[-\tilde Q,\overline{(V_{n_k})}\right]\right)=0$, $\lambda\left(C\left[\tilde Q, (N^0 _{m_n}\setminus{\{{i^{*} _n}\})}\right]\right)=0$ are analogy. 
\end{proof}

\begin{corollary}
The function  $F$ is a bijective mapping on   $[0;1]$, if all elements of the matrix  $P$ are positive numbers. 
\end{corollary}

\section{Integral properties}

\begin{theorem}
The Lebesgue integral of the function $F$ can be calculated by the
following formula
$$
\int^1 _0{F(x)dx}=z_1+\sum^{\infty} _{n=2}{\left(z_n\prod^{n-1} _{k=1}{\sigma_k}\right)},~~~\mbox{where}
$$
$$
z_n= \tilde\beta_{0,n}\tilde q_{0,n}+ \tilde\beta_{1,n}\tilde q_{1,n}+...+ \tilde\beta_{m_n,n}\tilde q_{m_n,n}=\beta_{0,n} q_{0,n}+ \beta_{1,n} q_{1,n}+...+ \beta_{m_n,n} q_{m_n,n},
$$
$$
\sigma_n= \tilde p_{0,n}\tilde q_{0,n}+ \tilde p_{1,n}\tilde q_{1,n}+...+ \tilde p_{m_n,n}\tilde q_{m_n,n}= p_{0,n}q_{0,n}+  p_{1,n} q_{1,n}+...+  p_{m_n,n} q_{m_n,n}.
$$
\end{theorem}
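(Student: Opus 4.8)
The plan is to bypass the shift-operator bookkeeping entirely and integrate the closed form \eqref{def: function - nega-tilde Q} of $F$ term by term over $[0;1]$. I avoid a one-step self-similar recurrence on purpose: peeling off a single digit via \eqref{def: function eq. system - nega-tilde Q 2} flips the parity of all later digits and replaces $F$ by the reflected solution on the shifted matrix $\tilde Q_1$, so such a recurrence does not close on $F$ itself. Instead, note that the $k$-th summand $T_k(x)=\tilde\beta_{i_k(x),k}\prod_{j=1}^{k-1}\tilde p_{i_j(x),j}$ (with $T_1(x)=\beta_{i_1(x),1}$) depends only on the first $k$ nega-$\tilde Q$-digits of $x$, hence is constant on every cylinder $\Delta^{-\tilde Q}_{c_1c_2\ldots c_k}$ with value $\tilde\beta_{c_k,k}\prod_{j=1}^{k-1}\tilde p_{c_j,j}$. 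Since the rank-$k$ cylinders partition $[0;1]$ up to a countable set of endpoints (property 3 of the cylinder lemma) and each has length $\prod_{j=1}^k\tilde q_{c_j,j}$ (property 5), for $k\ge 2$ one gets
$$
\int_0^1 T_k(x)\,dx=\sum_{c_1=0}^{m_1}\cdots\sum_{c_k=0}^{m_k}\left(\tilde\beta_{c_k,k}\prod_{j=1}^{k-1}\tilde p_{c_j,j}\right)\prod_{j=1}^{k}\tilde q_{c_j,j}.
$$

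The key algebraic step is that this multiple sum factorises, because each index $c_j$ occurs in exactly one factor: regrouping produces $\prod_{j=1}^{k-1}\tilde p_{c_j,j}\tilde q_{c_j,j}$ times $\tilde\beta_{c_k,k}\tilde q_{c_k,k}$, so the sum splits as $\bigl(\prod_{j=1}^{k-1}\sum_{c_j=0}^{m_j}\tilde p_{c_j,j}\tilde q_{c_j,j}\bigr)\bigl(\sum_{c_k=0}^{m_k}\tilde\beta_{c_k,k}\tilde q_{c_k,k}\bigr)$. Here I would record the \emph{reflection-invariance} observation: for even $n$ the substitution $c\mapsto m_n-c$ gives $\sum_{c}\tilde p_{c,n}\tilde q_{c,n}=\sum_c p_{m_n-c,n}q_{m_n-c,n}=\sum_c p_{c,n}q_{c,n}$, and likewise $\sum_c\tilde\beta_{c,n}\tilde q_{c,n}=\sum_c\beta_{c,n}q_{c,n}$, while for odd $n$ the tildes are absent. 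This is exactly what makes $\sigma_n$ and $z_n$ admit the two equal expressions in the statement, and it lets me identify the factors as $\sigma_j$ and $z_k$, whence $\int_0^1 T_k\,dx=z_k\prod_{j=1}^{k-1}\sigma_j$. The $k=1$ term is the same computation: $\int_0^1\beta_{i_1(x),1}\,dx=\sum_{c_1}\beta_{c_1,1}q_{c_1,1}=z_1$. Summing over $k$ yields the asserted formula.

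The only genuinely delicate point is justifying $\int_0^1 F=\lim_N\int_0^1 S_N$ for the partial sums $S_N=\sum_{k=1}^N T_k$, which also secures convergence of the answer. I would use the tail identity from the existence theorem, $F(x)-S_N(x)=\bigl(\prod_{j=1}^N\tilde p_{i_j(x),j}\bigr)F(\hat\varphi^N(x))$, together with the boundedness $|F|\le M$ on the compact interval (continuity of $F$ was proved above), to obtain $|F(x)-S_N(x)|\le M\prod_{j=1}^N|\tilde p_{i_j(x),j}|$. Integrating this bound by the same cylinder computation gives $\int_0^1|F-S_N|\,dx\le M\prod_{j=1}^N\bigl(\sum_c|p_{c,j}|q_{c,j}\bigr)\le M\prod_{j=1}^N\max_{c}|p_{c,j}|$, using $\sum_c q_{c,j}=1$ from condition $2^{\circ}$. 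Applying condition $3^{\circ}$ on $P$ to the sequence that maximises each factor shows $\prod_j\max_c|p_{c,j}|=0$.

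The main obstacle I anticipate is precisely this last step: each factor $\max_c|p_{c,j}|$ lies in $(0;1)$ (by $1^{\circ}$ it is $<1$, and it cannot be $0$ since $\sum_c p_{c,j}=1$), so the partial products $\prod_{j\le N}\max_c|p_{c,j}|$ are monotone and therefore converge to the vanishing infinite product; hence $\int_0^1|F-S_N|\,dx\to 0$. This simultaneously establishes the interchange and the convergence of $z_1+\sum_{n\ge 2}z_n\prod_{k<n}\sigma_k$. By contrast, the factorisation of the multiple sum and the reflection invariance are routine once the set-up above is in place.
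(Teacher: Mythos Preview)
Your argument is correct and genuinely different from the paper's. The paper does \emph{not} bypass the shift operator: it splits $\int_0^1 F$ over the rank-$1$ cylinders, applies the functional equation \eqref{def: function eq. system - nega-tilde Q 1} at level $k=0$ together with the change of variables $dx=\tilde q_{i_1,1}\,d(\hat\varphi(x))$ to obtain $\int_0^1 F\,dx=z_1+\sigma_1\int_0^1 F(\hat\varphi(x))\,d(\hat\varphi(x))$, and then iterates this step through the whole system \eqref{def: function eq. system - nega-tilde Q 1}, producing the partial sums $z_1+\sum_{n=2}^N z_n\prod_{k<n}\sigma_k$ plus a remainder $\bigl(\prod_{k\le N}\sigma_k\bigr)\int_0^1 F(\hat\varphi^N)\,d(\hat\varphi^N)$. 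Your concern that a one-step recurrence ``does not close on $F$ itself'' is therefore a bit overstated: the paper never needs it to close, since it feeds the output of level $k$ into the equation at level $k+1$, and the parity flip is absorbed by the $\tilde{\,\cdot\,}$ notation at each stage. What your route buys is a cleaner justification of convergence: the paper ends its iteration with ``$=\ldots$'' and never bounds the remainder, whereas your tail estimate $\int_0^1|F-S_N|\le M\prod_{j\le N}\max_c|p_{c,j}|\to 0$ (via condition $3^\circ$ applied to the maximising sequence) makes the limit passage rigorous and simultaneously proves convergence of the series for the integral. The paper's route, in turn, makes the self-similar origin of the product $\prod\sigma_k$ transparent without first writing out the full cylinder sum.
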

\begin{proof} From the equality 
\eqref{shift operator-tilde Q}, it follows that 
$$
d(\hat\varphi^n(x))=\tilde q_{i_{n+1},n+1}d(\hat\varphi^{n+1}(x)).
$$
By the definition of the function and the additive property of the  Lebesgue integral we  obtain that
$$
\int^1 _0 {{F}(x)dx}=\int^{a_{1,1}} _0 {{F}(x)dx}+\int^{a_{2,1}} _{a_{1,1}} {{F}(x)dx}+...+\int^{1} _{a_{m_1,1}} {{F}(x)dx}=
$$
$$
=\int^{a_{1,1}} _0 {p_{0,1}{F}(\hat{\varphi}(x))dx}+\int^{a_{2,1}} _{a_{1,1}} {\left[\beta_{1,1}+p_{1,1}{F}(\hat{\varphi}(x))\right]dx}+...+\int^{1} _{a_{m_1,1}} {\left[\beta_{m_1,1}+p_{m_1,1}{F}(\hat{\varphi}(x))\right]dx}=
$$
$$
=p_{0,1}q_{0,1}\int^{1} _{0} {{F}(\hat{\varphi}(x))d(\hat{\varphi}(x))}+\beta_{1,1}q_{1,1}+p_{1,1}q_{1,1}\int^{1} _{0} {{F}(\hat{\varphi}(x))d(\hat{\varphi}(x))}+...+\beta_{m_1,1}q_{m_1,1}+
$$
$$
+p_{m_1,1}q_{m_1,1}\int^{1} _{0} {{F}(\hat{\varphi}(x))d(\hat{\varphi}(x))}=(\beta_{0,1}q_{0,1}+\beta_{1,1}q_{1,1}+...+\beta_{m_1,1}q_{m_1,1})+
$$
$$
+(p_{0,1}q_{0,1}+p_{1,1}q_{1,1}+...+p_{m_1,1}q_{m_1,1})\int^{1} _{0} {{F}(\hat{\varphi}(x))d(\hat{\varphi}(x))}=z_1+\sigma_1\int^{1} _{0} {{F}(\hat{\varphi}(x))d(\hat{\varphi}(x))}=z_1+
$$
$$
+\sigma_1\left(\int^{a_{1,2}} _{0}{(\tilde \beta_{m_2,2} +\tilde p_{m_2,2}{F}(\hat{\varphi}^2(x)))d(\hat{\varphi}(x))}+...+\int^{1} _{a_{m_2,2}}{(\tilde \beta_{0,2} +\tilde p_{0,2}{F}(\hat{\varphi}^2(x)))d(\hat{\varphi}(x))}\right)=
$$
$$
=z_1+\sigma_1(\beta_{0,2}q_{0,2}+\beta_{1,2}q_{1,2}+...+\beta_{m_2,2}q_{m_2,2}+(p_{0,2}q_{0,2}+...+ 
$$
$$
 +p_{m_2,2}q_{m_2,2})\int^{1} _{0} {{F}(\hat{\varphi^2}(x))d(\hat{\varphi^2}(x))})=
z_1+z_2\sigma_1+\sigma_1\sigma_2\int^{1} _{0} {{F}(\hat{\varphi^2}(x))d(\hat{\varphi^2}(x))}=...=
$$
$$
=z_1+z_2\sigma_1+z_3\sigma_1\sigma_2+...+z_n\sigma_1\sigma_2...\sigma_{n-1}+\sigma_1\sigma_2...\sigma_n\int^{1} _{0} {{F}(\hat{\varphi^n}(x))d(\hat{\varphi^n}(x))}=....
$$

Continuing the process indefinitely we obtain that
$$
\int^1 _0{F(x)dx}=z_1+\sum^{\infty} _{n=2}{\left(z_n\prod^{n-1} _{k=1}{\sigma_k}\right)}.
$$
\end{proof}

\section{Modeling of the singular distribution functions}

\begin{lemma}
If the function  $F$ has a derivative   $F^{'}(x_0)$ at a nega-$\tilde Q$-irrational point  $x_0$, then    
$$
F^{'}(x_0)=\lim_{n\to\infty}{\left(\prod^{n} _{j=1}{\frac{\tilde p_{i_j(x_0),j}}{\tilde q_{i_j(x_0),j}}}\right)}=\prod^{\infty} _{n=1}{\frac{\tilde p_{i_n(x_0),n}}{\tilde q_{i_n(x_0),n}}}.
$$
\end{lemma}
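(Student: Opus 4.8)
The plan is to exploit the two structural lemmas already proved: the increment formula $\mu_F\left(\Delta^{-\tilde Q}_{c_1c_2\ldots c_n}\right)=\prod_{j=1}^{n}\tilde p_{i_j,j}$ and the length formula $\left|\Delta^{-\tilde Q}_{c_1c_2\ldots c_n}\right|=\prod_{j=1}^{n}\tilde q_{i_j,j}$. Since $x_0=\Delta^{-\tilde Q}_{i_1i_2\ldots i_n\ldots}$ is nega-$\tilde Q$-irrational, its symbolic representation is unique and $x_0$ is never an endpoint of a cylinder; hence for every $n$ it lies \emph{strictly} inside the cylinder $\Delta^{-\tilde Q}_{i_1(x_0)\ldots i_n(x_0)}$. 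Denoting the left and right endpoints of this cylinder by $x_n'$ and $x_n''$, I would record that $x_n'<x_0<x_n''$ and that both sequences converge to $x_0$, because the cylinder length $\prod_{j=1}^{n}\tilde q_{i_j,j}$ tends to $0$ (property (5) of the cylinder lemma together with condition $3^{\circ}$ on $\tilde Q$, which forces every such infinite product to vanish).

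First I would compute the difference quotient of $F$ across the whole cylinder. Inspecting the endpoint labels appearing in the definition of $\mu_F$ and matching them against the endpoints listed in the cylinder lemma shows that, for both parities of $n$, the quantity $\mu_F\left(\Delta^{-\tilde Q}_{i_1\ldots i_n}\right)$ is exactly the natural signed increment $F(x_n'')-F(x_n')$. Dividing by the positive length then gives
$$
\frac{F(x_n'')-F(x_n')}{x_n''-x_n'}=\frac{\mu_F\left(\Delta^{-\tilde Q}_{i_1\ldots i_n}\right)}{\left|\Delta^{-\tilde Q}_{i_1\ldots i_n}\right|}=\frac{\prod_{j=1}^{n}\tilde p_{i_j,j}}{\prod_{j=1}^{n}\tilde q_{i_j,j}}=\prod_{j=1}^{n}\frac{\tilde p_{i_j,j}}{\tilde q_{i_j,j}},
$$
which is well defined since every $\tilde q_{i_j,j}>0$.

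The decisive step is to transfer this quotient, taken over a two-sided shrinking neighbourhood, into the value $F'(x_0)$. Setting $\lambda_n=\frac{x_n''-x_0}{x_n''-x_n'}\in(0;1)$, one has the convex decomposition
$$
\frac{F(x_n'')-F(x_n')}{x_n''-x_n'}=\lambda_n\,\frac{F(x_n'')-F(x_0)}{x_n''-x_0}+(1-\lambda_n)\,\frac{F(x_0)-F(x_n')}{x_0-x_n'}.
$$
Because $F$ is assumed differentiable at $x_0$, both one-sided difference quotients on the right converge to $F'(x_0)$ as $x_n',x_n''\to x_0$; a convex combination of two quantities sharing a common limit has that same limit, regardless of the weights $\lambda_n$. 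Hence the left-hand side tends to $F'(x_0)$, and comparison with the explicit value above yields $F'(x_0)=\lim_{n\to\infty}\prod_{j=1}^{n}\frac{\tilde p_{i_j,j}}{\tilde q_{i_j,j}}$, which is by definition the infinite product $\prod_{n=1}^{\infty}\frac{\tilde p_{i_n,n}}{\tilde q_{i_n,n}}$.

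I expect the main obstacle to be the bookkeeping that guarantees $x_0$ sits strictly between $x_n'$ and $x_n''$, so that $\lambda_n$ stays in the open interval $(0;1)$ and neither one-sided quotient degenerates: this is exactly where the hypothesis that $x_0$ is nega-$\tilde Q$-irrational (rather than nega-$\tilde Q$-rational) is essential, since a rational point would coincide with an endpoint and only a one-sided quotient would be available. The secondary technical point is verifying that $\mu_F$ equals $F(x_n'')-F(x_n')$ and not its negative for each parity, which amounts to the orientation comparison in the first paragraph; apart from this sign check the argument is routine.
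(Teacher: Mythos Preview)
Your argument is correct and follows exactly the route the paper takes: it too appeals to the cylinder length formula (property~5), the shrinking cylinders (property~6), and the increment lemma $\mu_F\bigl(\Delta^{-\tilde Q}_{c_1\ldots c_n}\bigr)=\prod_{j=1}^{n}\tilde p_{i_j,j}$, so that the endpoint difference quotient equals $\prod_{j=1}^{n}\tilde p_{i_j,j}/\tilde q_{i_j,j}$ and converges to $F'(x_0)$ whenever the derivative exists. The paper compresses all of this into a single sentence; your convex-combination identity and the explicit verification that $\mu_F=F(x_n'')-F(x_n')$ for both parities simply make rigorous what the paper leaves implicit.
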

\begin{proof}
The statement is true, because the Property~6 of cylinders $\Delta^{-\tilde Q} _{c_1c_2...c_n}$ is true and the conditions    $ x\to x_0, n \to \infty$ are equivalent.
\end{proof}

\begin{theorem}
If for all  $n\in\mathbb N$ and  $i=\overline{0,m_n}$  it is true that   $p_{i,n}\ge0$, then the unique solution of the functional equations system   \eqref{def: function eq. system - nega-tilde Q 1}  is a continuous function of probabilities distribution on  $[0;1]$.
\end{theorem}
\begin{proof} It is easy to see that 
$$
F(0)=F\left(\Delta^{-\tilde Q} _{0m_20m_4...0m_{2k}...}\right)=\beta_{0,1}+\sum^{\infty} _{n=2}{\left[\beta_{0,n}\prod^{n-1} _{j=1}{p_{0,j}}\right]}=0,
$$
$$
F(1)=F\left(\Delta^{-\tilde Q} _{m_10m_3...0m_{2k-1}...}\right)=\beta_{m_1,1}+\sum^{\infty} _{n=2}{\left[\beta_{m_n,n}\prod^{n-1} _{j=1}{p_{m_j,j}}\right]}=1.
$$

Let  $x_1=\Delta^{-\tilde Q} _{i_1(x_1)i_2(x_1)...i_{n}(x_1)...}$, $x_2=~\Delta^{-\tilde Q} _{i_1(x_2)i_2(x_2)...i_{n}(x_2)...}$ be the some numbers from  $[0;1]$ such that  $x_1<x_2$. Therefore, there exists a number $n_0$, that $i_j(x_1)=i_j(x_2)$ for all  $j=\overline{1,n_0-1}$ and $i_{n_0}(x_1)<i_{n_0}(x_2)$, if $n_0$ is an odd, or  $i_{n_0}(x_1)>i_{n_0}(x_2)$, if   $n_0$ is an even.

Whence, 
$$
F(x_2)-F(x_1)=\left(\prod^{n_0-1} _{j=1} {\tilde{p}_{i_{j}(x_2),j}}\right)\cdot(\tilde{\beta}_{i_{n_0}(x_2),n_0}-\tilde{\beta}_{i_{n_0}(x_1),n_0}+
$$
$$
+\sum^{\infty} _{k=1} {\left(\tilde{\beta}_{i_{n_0+k}(x_2),n_0+k}\prod^{k-1} _{j=0} {\tilde{p}_{i_{n_0+j}(x_2),n_0+j}}\right)}-\sum^{\infty} _{k=1} {\left(\tilde{\beta}_{i_{n_0+k}(x_1),n_0+k}\prod^{k-1} _{j=0} {\tilde{p}_{i_{n_0+j}(x_1),n_0+j}}\right)})\ge
$$
$$
\ge\left(\prod^{n_0-1} _{j=1} {\tilde{p}_{i_{j}(x_2),j}}\right)\cdot\left(\tilde{\beta}_{i_{n_0}(x_2),n_0}-\tilde{\beta}_{i_{n_0}(x_1),n_0}-\sum^{\infty} _{k=1} {\left(\tilde{\beta}_{i_{n_0+k}(x_1),n_0+k}\prod^{k-1} _{j=0} {\tilde{p}_{i_{n_0+j}(x_1),n_0+j}}\right)}\right)=\kappa,
$$
where for an odd  $n_0$
$$
\kappa\ge \left(\prod^{n_0-1} _{j=1} {\tilde{p}_{i_{j}(x_2),j}}\right)\left(p_{i_{n_0}(x_1),n_0}+p_{i_{n_0}(x_1)+1,n_0}+...+p_{i_{n_0}(x_2)-1,n_0}-p_{i_{n_0}(x_1),n_0}\max_{x\in[0;1]}{F\left(\hat\varphi^{n_0}(x_1)\right)}\right)=
$$
$$
=\left(\prod^{n_0-1} _{j=1} {\tilde{p}_{i_{j}(x_2),j}}\right)(p_{i_{n_0}(x_1)+1,n_0}+...+p_{i_{n_0}(x_2)-1,n_0})\ge 0.
$$

Let  $n_0$ be an even number. Then
$$
\kappa\ge \left(\prod^{n_0-1} _{j=1} {\tilde{p}_{i_{j}(x_2),j}}\right)(p_{m_{n_0}-i_{n_0}(x_1),n_0}+p_{m_{n_0}-i_{n_0}(x_1)+1,n_0}+...+p_{m_{n_0}-i_{n_0}(x_2)-1,n_0}-
$$
$$
-p_{m_{n_0}-i_{n_0}(x_1),n_0}\max_{x\in[0;1]}{F\left(\hat\varphi^{n_0}(x_1)\right)})=
$$
$$
=\left(\prod^{n_0-1} _{j=1} {\tilde{p}_{i_{j}(x_2),j}}\right)(p_{m_{n_0}-i_{n_0}(x_1)+1,n_0}+...+p_{m_{n_0}-i_{n_0}(x_2)-1,n_0})\ge 0.
$$

 If all elements $p_{i,n}$  of the matrix $P$ are positive numbers, then the inequality   $F(x_2)-F(x_1)>0$ holds.

Since the function $F$ is a continuous function at all points from $[0;1]$, it follows that the function is a continuous function of probabilities distribution on  $[0;1]$.
\end{proof}

Let $\eta$ be a random variable, that defined by the following form 
$$
\eta=  \Delta^{\tilde Q} _{\xi_1\xi_2...\xi_{n}...},
$$
where
$$
\xi_n=\begin{cases}
i_n,&\text{if $n$ is an odd;}\\
m_n-i_n,&\text{if  $n$ is an even,}
\end{cases}
$$
$n=1,2,3,...$,  the digits $\xi_n$ are  random and taking the values $0,1,...,m_n$ with probabilities ${p}_{0,n}, {p}_{1,n}, ..., {p}_{m_n,n}$. That is  $\xi_n$ are independent and $P\{\xi_n=i_n\}=p_{i_n,n}$, $i_n \in N^0 _{m_n}$. 

\begin{theorem}
The distribution function $\tilde{F}_{\eta}$ of the random variable $\eta$ can be
represented by
$$
\tilde{F}_{\eta}(x)=\begin{cases}
0,&\text{ $x< 0$;}\\
\beta_{i_1(x), 1}+\sum^{\infty} _{n=2} {\left[\tilde{\beta}_{i_n(x),n} \prod^{n-1} _{j=1} {\tilde{p}_{i_j(x),j}}\right]},&\text{ $0 \le x<1$;}\\
1,&\text{ $x\ge 1$.}
\end{cases}
$$
\end{theorem}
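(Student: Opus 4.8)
The plan is to show that on $[0;1)$ the function $F$ coincides with the cumulative distribution function $x\mapsto P\{\eta<x\}$, and then to read off the three-branch formula for $\tilde F_{\eta}$ from the continuity of $F$ together with the boundary values $F(0)=0$, $F(1)=1$ established in the previous theorem. First I would pass from the $\tilde Q$-representation of $\eta$ to its nega-$\tilde Q$-representation. By Lemma~\ref{lem: nega-tilde Q-representation 1}, writing $\eta=\Delta^{\tilde Q}_{\xi_1\xi_2\ldots}$ with $\xi_n=i_n$ for odd $n$ and $\xi_n=m_n-i_n$ for even $n$ is the same as writing $\eta=\Delta^{-\tilde Q}_{i_1i_2\ldots}$, so the nega-$\tilde Q$-digits of $\eta$ are $i_n(\eta)=\xi_n$ for odd $n$ and $i_n(\eta)=m_n-\xi_n$ for even $n$. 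Since the $\xi_n$ are independent, the $i_n(\eta)$ are independent as well, and for every admissible value $c$,
$$P\{i_n(\eta)=c\}=\begin{cases}P\{\xi_n=c\}=p_{c,n},& n \text{ odd};\\ P\{\xi_n=m_n-c\}=p_{m_n-c,n},& n \text{ even};\end{cases}=\tilde p_{c,n}.$$

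The main step is to decompose the event $\{\eta<x\}$ by the first index at which the nega-$\tilde Q$-digits of $\eta$ and of $x=\Delta^{-\tilde Q}_{i_1(x)i_2(x)\ldots}$ disagree. Here the alternating monotonicity recorded in property~4 of the cylinder lemma is decisive: on an odd level increasing the digit moves the cylinder to the right, while on an even level it moves it to the left. Hence, conditionally on $i_j(\eta)=i_j(x)$ for $j<n$, one has $\eta<x$ precisely when $i_n(\eta)<i_n(x)$ if $n$ is odd and when $i_n(\eta)>i_n(x)$ if $n$ is even. These events are disjoint over $n$ and exhaust $\{\eta<x\}$ up to the event $\{\eta=x\}$, which carries no mass by condition $3^{\circ}$. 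Using independence,
$$P\{\eta<x\}=\sum_{n=1}^{\infty}\left(\prod_{k=1}^{n-1}\tilde p_{i_k(x),k}\right)r_n, \quad\text{where } r_n=\begin{cases}\sum_{j=0}^{i_n(x)-1}\tilde p_{j,n},& n \text{ odd};\\ \sum_{j=i_n(x)+1}^{m_n}\tilde p_{j,n},& n \text{ even}.\end{cases}$$

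It then remains to identify $r_n$ with $\tilde\beta_{i_n(x),n}$. For odd $n$, the relation $\tilde p_{j,n}=p_{j,n}$ gives $r_n=\sum_{j=0}^{i_n(x)-1}p_{j,n}=\beta_{i_n(x),n}=\tilde\beta_{i_n(x),n}$; for even $n$, substituting $l=m_n-j$ and using $\tilde p_{j,n}=p_{m_n-j,n}$ turns $r_n$ into $\sum_{l=0}^{m_n-i_n(x)-1}p_{l,n}=\beta_{m_n-i_n(x),n}=\tilde\beta_{i_n(x),n}$. Substituting and isolating the $n=1$ term yields exactly $F(x)$, so $P\{\eta<x\}=F(x)$ on $[0;1)$. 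Finally, since $F$ is continuous on $[0;1]$ with $F(0)=0$ and $F(1)=1$, the event $\eta<x$ has probability $0$ for $x\le 0$ and probability $1$ for $x\ge 1$, which produces the three-branch formula for $\tilde F_{\eta}$.

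I expect the only genuine obstacle to be the careful bookkeeping of the order condition at even levels — the digit reversal $i_n(\eta)=m_n-\xi_n$ and the ensuing index shift in $r_n$ — all of which must be anchored in property~4 of the cylinder lemma; once the parity-dependent inequalities are pinned down, the rest is a routine resummation. An alternative and equally viable route is to observe that $P\{\eta\in\Delta^{-\tilde Q}_{c_1\ldots c_n}\}=\prod_{k=1}^{n}\tilde p_{c_k,k}=\mu_{F}\left(\Delta^{-\tilde Q}_{c_1\ldots c_n}\right)$ by \eqref{eq: function nega-tilde Q pryrist}, so that the law of $\eta$ and the Lebesgue--Stieltjes measure generated by $F$ agree on the semiring of cylinders and hence on all Borel sets, giving $\tilde F_{\eta}=F$ on $[0;1)$.
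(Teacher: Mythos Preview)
Your proposal is correct and follows essentially the same approach as the paper: decompose $\{\eta<x\}$ according to the first digit where the expansions of $\eta$ and $x$ disagree, compute the probability of each piece by independence, and sum. The paper carries this out directly in terms of the $\tilde Q$-digits $\xi_n$ (writing the condition at even levels as $\xi_{2k}<m_{2k}-i_{2k}(x)$), whereas you first pass to the nega-$\tilde Q$-digits $i_n(\eta)$ and invoke property~4 of the cylinder lemma to justify the parity-dependent inequality --- this is the same computation up to the substitution $i_n(\eta)=m_n-\xi_n$ on even levels. Your writeup is more detailed than the paper's (which is quite terse), and the measure-theoretic alternative via \eqref{eq: function nega-tilde Q pryrist} that you sketch at the end does not appear in the paper.
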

\begin{proof} Let $k\in\mathbb N$. The statement follows from the equalities 
$$
\{\eta<x\}=\{\xi_1<i_1(x)\}\cup\{\xi_1=i_1(x),\xi_2<m_2-i_2(x)\}\cup...\cup
$$
$$
\cup\{\xi_1=i_1(x),\xi_2=m_2-i_2(x),...,\xi_{2k-1}<i_{2k-1}(x)\}\cup
$$
$$
\cup\{\xi_1=i_1(x),\xi_2=m_2-i_2(x),...,\xi_{2k-1}=i_{2k-1}(x),\xi_{2k}<m_{2k}-i_{2k}(x)\}\cup...,
$$
$$
P\{\xi_1=i_1(x),\xi_2=m_2-i_2(x),...,\xi_{2k-1}<i_{2k-1}(x)\}=\beta_{\varepsilon_{2k-1}(x),2k-1}\prod^{2k-2} _{j=1} {\tilde{p}_{i_{j}(x),j}},
$$
$$
P\{\xi_1=i_1(x),\xi_2=m_2-\varepsilon_2(x),...,\xi_{2k}<m_{2k}-\varepsilon_{2k}(x)\}=\beta_{m_{2k}-\varepsilon_{2k}(x),2k}\prod^{2k-1} _{j=1} {\tilde{p}_{i_{j}(x),j}}
$$
and the definition  of a  distribution function. 
\end{proof}

One can to formulate the following conclusions by the statements  \cite[p.~170]{Pra98} .

\begin{lemma}
Let for any  $n\in\mathbb N$ and $i=\overline{0,m_n}$ the inequality   $p_{i,n}\ge 0$ holds.

The function  $F$ is a singular function of Cantor type iff 
\begin{enumerate}
\item
$$
\prod^{\infty} _{n=1}{\left(\sum_{i: \tilde p_{i,n}>0}{\tilde q_{i,n}}\right)}=0
$$
or
\item
$$
\sum^{\infty} _{n=1}{\left(\sum_{i: \tilde p_{i,n}=0}{\tilde q_{i,n}}\right)}=\infty.
$$
\end{enumerate}
\end{lemma}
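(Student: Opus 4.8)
The plan is to read this lemma as the nega-$\tilde Q$ restatement of the cited criterion of Pratsiovytyi and to reduce it to the probabilistic meaning of $F$ already established above. First I would record that, under the hypothesis $p_{i,n}\ge 0$, the preceding theorems identify $F$ as the distribution function of the random variable $\eta=\Delta^{\tilde Q}_{\xi_1\xi_2\ldots}$ with independent $\tilde Q$-symbols, $P\{\xi_n=i\}=p_{i,n}$, and show $F$ is continuous; hence $\mu_F$ has no atoms and is of pure type, either absolutely continuous or singular. Using Lemma \ref{lem: nega-tilde Q-representation 1} I would pass from the nega-$\tilde Q$ encoding of $x$ to the ordinary $\tilde Q$-encoding, so that the statement cited from \cite[p.~170]{Pra98} applies in its classical form. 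The only bookkeeping is that at even levels the symbol $i$ is replaced by $m_n-i$; this permutes indices but leaves every level-sum invariant, so that $\sum_{i:\,\tilde p_{i,n}>0}\tilde q_{i,n}=\sum_{i:\,p_{i,n}>0}q_{i,n}$, and similarly for the complementary sums. In particular the tilde and non-tilde forms of conditions (1) and (2) coincide.

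The analytic core is the computation of the Lebesgue measure of the spectrum of $\mu_F$. Putting $S_n=\{i:\ \tilde p_{i,n}>0\}$ and $c_n=\sum_{i\in S_n}\tilde q_{i,n}$, the support of $\eta$ is contained in $\bigcap_{N}\bigcup\Delta^{-\tilde Q}_{c_1\ldots c_N}$, the union taken over $c_j\in S_j$; by property (5) of the cylinder lemma each admissible cylinder has length $\prod_{j}\tilde q_{c_j,j}$, and summing over the cylinders at level $N$ gives total Lebesgue measure $\prod_{n=1}^{N}c_n$. Thus the spectrum has measure $\prod_{n=1}^{\infty}c_n$, which is exactly the quantity in (1). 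Since $c_n=1-\sum_{i:\,\tilde p_{i,n}=0}\tilde q_{i,n}\in(0,1]$, the elementary equivalence $\prod_n(1-a_n)=0\Longleftrightarrow\sum_n a_n=\infty$ (for $0\le a_n<1$) shows at once that (1) holds if and only if (2) holds, so the two displayed conditions are two faces of the single statement ``the spectrum of $\mu_F$ is Lebesgue-null''.

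To finish the ``if'' direction I would note that when $\prod_n c_n=0$ the measure $\mu_F$ is carried by a set of Lebesgue measure zero, while $F$ is continuous, so $F$ is singular; this is a direct concentration argument. For the converse I would invoke \cite[p.~170]{Pra98}, which in the independent-symbol setting characterises singularity through precisely this spectrum condition once the representation has been normalised as above, and combine it with the measure computation to conclude that singularity forces $\prod_n c_n=0$, i.e.\ (1), equivalently (2). The main obstacle I anticipate is not the ``if'' implication but making the appeal to \cite[p.~170]{Pra98} rigorous: one must verify that the nega-$\tilde Q$ cylinder structure, with its alternating ``left/right'' orientation recorded in the cylinder lemma, is transported to the ordinary $\tilde Q$ cylinder structure by Lemma \ref{lem: nega-tilde Q-representation 1} in a measure-preserving fashion, so that Pratsiovytyi's criterion transfers verbatim to the level-sums $c_n$.
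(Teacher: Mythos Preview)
Your proposal is correct and matches the paper's approach: the paper itself offers no proof beyond the sentence immediately preceding the lemma, which simply invokes the known criterion from \cite[p.~170]{Pra98}, and your reduction---identify $F$ with the distribution of $\eta$, pass from nega-$\tilde Q$ to $\tilde Q$ via the index permutation $i\mapsto m_n-i$ at even levels so that the tilde and non-tilde level-sums coincide, then apply Pratsiovytyi's criterion---is exactly how one makes that citation precise. Your additional observations (the equivalence of (1) and (2) through $\prod_n(1-a_n)=0\Leftrightarrow\sum_n a_n=\infty$, and the explicit computation of the Lebesgue measure of the spectrum as $\prod_n c_n$) go beyond what the paper records but are correct and entirely in the same spirit.
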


\section{Modeling of functions, that does not have a derivative at any a nega-$\tilde Q$-rational point}

\begin{theorem}
If the following properties of the matrix  $P$ hold:
\begin{itemize}
\item for all $n \in \mathbb N$, $i_n \in N^1 _{m_n}\equiv\{1,2,...,m_n\}$
$$
p_{i_n,n}\cdot p_{i_n-1,n}<0;
$$
\item the conditions 
$$
\lim_{n \to \infty} {\prod^{n} _{k=1} {\frac{p_{0,k}}{q_{0,k}}}}\ne  0, \lim_{n \to \infty} {\prod^{n} _{k=1} {\frac{p_{m_k,k}}{q_{m_k,k}} }}\ne 0
$$
\end{itemize}
hold simultaneously, then the unique solution of the functional equations system    \eqref{def: function eq. system - nega-tilde Q 1}  does not have a finite or an infinite derivative at any nega-$\tilde Q$-rational point from the segment $[0;1]$.
\end{theorem}

\begin{proof} Let  $x_0$ be a some  nega-$\tilde Q$-rational point. That is 
$$
x_0=\Delta^{-\tilde Q} _{i_1i_2...i_{n-1}i_nm_{n+1}0m_{n+3}0m_{n+5}...}=\Delta^{-\tilde Q} _{i_1i_2...i_{n-1}[i_n-1]0m_{n+2}0m_{n+4}...}, 
$$
$i_n \ne 0$.

In a case of an odd  $n$  let us denote 
$$
x_0=x^{(1)}_0=\Delta^{-\tilde Q} _{i_1i_2...i_{n-1}i_nm_{n+1}0m_{n+3}0m_{n+5}...}=\Delta^{-\tilde Q} _{i_1i_2...i_{n-1}[i_n-1]0m_{n+2}0m_{n+4}-...}=x^{(2)}_0.
$$
If $n$ is an even, then let us denote 
$$
x_0=x^{(1)}_0=\Delta^{-\tilde Q} _{i_1i_2...i_{n-1}[i_n-1]0m_{n+2}0m_{n+4}...}=\Delta^{-\tilde Q} _{i_1i_2...i_{n-1}i_nm_{n+1}0m_{n+3}0m_{n+5}...}=x^{(2)}_0.
$$
Consider the sequences $(x^{'} _k)$, $(x^{''} _k)$ such that $x^{'} _k\to x_0$, $x^{''} _k\to x_0$ as  $k\to \infty$ and for an odd number    $n$
$$
x^{'} _k=\begin{cases}
\Delta^{-\tilde Q} _{i_1...i_{n-1}i_nm_{n+1}0m_{n+3}0...m_{n+k-1}1m_{n+k+1}0m_{n+k+3}...},&\text{if $k$ is an even;}\\
\Delta^{-\tilde Q} _{i_1...i_{n-1}i_nm_{n+1}0...m_{n+k-2}0[m_{n+k}-1]0m_{n+k+2}0m_{n+k+4}...},&\text{if $k$ is an odd,}
\end{cases}
$$
$$
x^{''} _k=\begin{cases}
\Delta^{-\tilde Q} _{i_1...i_{n-1}[i_n-1]0m_{n+2}0...m_{n+k-1}00m_{n+k+2}0m_{n+k+4}...},&\text{if $k$ is an odd;}\\
\Delta^{-\tilde Q} _{i_1...i_{n-1}[i_n-1]0m_{n+2}0...m_{n+k}m_{n+k+1}0m_{n+k+3}0m_{n+k+5}...},&\text{if $k$ is an even,}
\end{cases}
$$
and for the case of an even  $n$
$$
x^{'} _k=\begin{cases}
\Delta^{-\tilde Q} _{i_1...i_{n-1}[i_n-1]0m_{n+2}0m_{n+4}0...m_{n+k-1}1m_{n+k+1}0m_{n+k+3}...},&\text{if $k$ is an odd;}\\
\Delta^{-\tilde Q} _{i_1...i_{n-1}[i_n-1]0m_{n+2}0...m_{n+k-2}0[m_{n+k}-1]0m_{n+k+2}0m_{n+k+4}...},&\text{if $k$ is an even,}
\end{cases}
$$
$$
x^{''} _k=\begin{cases}
\Delta^{-\tilde Q} _{i_1...i_{n-1}i_nm_{n+1}0m_{n+3}...0m_{n+k}m_{n+k+1}0m_{n+k+3}...},&\text{if $k$ is an odd;}\\
\Delta^{-\tilde Q} _{i_1...i_{n-1}i_nm_{n+1}0...m_{n+k-1}00m_{n+k+2}0m_{n+k+4}0m_{n+k+6}...},&\text{if $k$ is an even.}
\end{cases}
$$

Therefore, if $n$ is an odd, then 
$$
x^{'} _k-x^{(1)} _0= \Delta^{\tilde Q} _{i_1[m_2-i_2]i_3[m_4-i_4]...i_n\underbrace{0...0}_{k-1}1(0)}-\Delta^{\tilde Q} _{i_1[m_2-i_2]...i_n(0)}\equiv a_{1,n+k}\left(\prod^{n} _{j=1}{\tilde q_{i_j,j}}\right)\left(\prod^{n+k-1} _{t=n+1}{q_{0,t}}\right),
$$
$$
F\left(x^{'} _k\right)-F\left(x^{(1)} _0\right)=\beta_{1,n+k}\left(\prod^{n} _{j=1}{\tilde p_{i_j,j}}\right)\left(\prod^{n+k-1} _{t=n+1}{p_{0,t}}\right)=\left(\prod^{n} _{j=1}{\tilde p_{i_j,j}}\right)\left(\prod^{n+k} _{t=n+1}{p_{0,t}}\right),
$$
$$
x^{(2)} _0-x^{''} _k=
$$
$$
=\Delta^{\tilde Q} _{i_1[m_2-i_2]...[m_{n-1}-i_{n-1}][i_n-1]m_{n+1}m_{n+2}...}-\Delta^{\tilde Q} _{i_1[m_2-i_2]...[m_{n-1}-i_{n-1}][i_n-1]m_{n+1}m_{n+2}...m_{n+k}(0)}\equiv
$$
$$
\equiv q_{i_n-1,n}\left(\prod^{n-1} _{j=1}{\tilde q_{i_j,j}}\right)\left(\prod^{n+k} _{t=n+1}{q_{m_t,t}}\right),
$$
$$
F\left(x^{(2)} _0\right)-F\left(x^{''} _k\right)=p_{i_n-1,n}\left(\prod^{n-1} _{j=1}{\tilde p_{i_j,j}}\right)\left(\prod^{n+k} _{t=n+1}{p_{m_t,t}}\right).
$$

If $n$ is an even, then  
$$
x^{'} _k-x^{(1)} _0= \Delta^{\tilde Q} _{i_1[m_2-i_2]...i_{n-1}[m_n-i_n+1]\underbrace{0...0}_{k-1}1(0)}-\Delta^{\tilde Q} _{i_1[m_2-i_2]...i_{n-1}[m_n-i_n+1](0)}\equiv 
$$
$$
\equiv a_{1,n+k}\left(\prod^{n-1} _{j=1}{\tilde q_{i_j,j}}\right)\left(\prod^{n+k-1} _{t=n+1}{q_{0,t}}\right)q_{m_n-i_n+1,n}=\left(\prod^{n-1} _{j=1}{\tilde q_{i_j,j}}\right)\left(\prod^{n+k} _{t=n+1}{q_{0,t}}\right)q_{m_n-i_n+1,n},
$$
$$
F\left(x^{'} _k\right)-F\left(x^{(1)} _0\right)=\beta_{1,n+k}\left(\prod^{n-1} _{j=1}{\tilde p_{i_j,j}}\right)\left(\prod^{n+k-1} _{t=n+1}{p_{0,t}}\right)p_{m_n-i_n+1,n}=
$$
$$
=\left(\prod^{n-1} _{j=1}{\tilde p_{i_j,j}}\right)\left(\prod^{n+k} _{t=n+1}{p_{0,t}}\right)p_{m_n-i_n+1,n},
$$
$$
x^{(2)} _0-x^{''} _k=\Delta^{\tilde Q} _{i_1[m_2-i_2]...i_{n-1}[m_{n}-i_{n}]m_{n+1}m_{n+2}...}-\Delta^{\tilde Q} _{i_1[m_2-i_2]...i_{n-1}[m_{n}-i_{n}]m_{n+1}m_{n+2}...m_{n+k}(0)}\equiv
$$
$$
\equiv \left(\prod^{n} _{j=1}{\tilde q_{i_j,j}}\right)\left(\prod^{n+k} _{t=n+1}{q_{m_t,t}}\right),
$$
$$
F\left(x^{(2)} _0\right)-F\left(x^{''} _k\right)=\left(\prod^{n} _{j=1}{\tilde p_{i_j,j}}\right)\left(\prod^{n+k} _{t=n+1}{p_{m_t,t}}\right).
$$

So,
$$
B^{'} _k=\frac{F(x^{'} _k)-F(x_0)}{x^{'} _k-x_0}=\begin{cases}
\frac{p_{i_n,n}}{q_{i_n,n}}\left(\prod^{n-1} _{j=1} {\frac{\tilde p_{i_j,j}}{\tilde q_{i_j,j}}}\right) \left(\prod^{n+k} _{t=n+1} {\frac{p_{0,t}}{q_{0,t}}}\right),\text{ $n$ is an odd;}\\
\\
\frac{p_{m_n-i_n+1,n}}{q_{m_n-i_n+1,n}}\left(\prod^{n-1} _{j=1} {\frac{\tilde p_{i_j,j}}{\tilde q_{i_j,j}}}\right) \left(\prod^{n+k} _{t=n+1} {\frac{p_{0,t}}{q_{0,t}}}\right),\text{ $n$ is an even.}
\end{cases}
$$
$$
B^{''} _k=\frac{F(x_0)-F(x^{''} _k)}{x_0-x^{''} _k}=\begin{cases}
\frac{p_{i_n-1,n}}{q_{i_n-1,n}} \left(\prod^{n-1} _{j=1} {\frac{\tilde p_{i_j,j}}{\tilde q_{i_j,j}}}\right) \left(\prod^{n+k} _{t=n+1} {\frac{ p_{m_t,t}}{ q_{m_t,t}}}\right),\text{ $n$ is an odd;}\\
\\
 \frac{p_{m_n-i_n,n}}{q_{m_n-i_n,n}}\left(\prod^{n-1} _{j=1} {\frac{\tilde p_{i_j,j}}{\tilde q_{i_j,j}}}\right) \left(\prod^{n+k} _{t=n+1} {\frac{  p_{m_t,t}}{  q_{m_t,t}}}\right),\text{$n$ is an even.}
\end{cases}
$$

 Let us denote by 
$$
b_{0,k}=\prod^{n+k} _{t=n+1} {\frac{p_{0,t}}{q_{0,t}}}, ~~~b_{m_k,k}=\prod^{n+k} _{m=n+1} {\frac{p_{m_t,t}}{q_{m_t,t}} }.
$$

Since the conditions $\prod^{n-1} _{j=1} {(\tilde p_{i_j,j}/\tilde q_{i_j,j})}=const$, $p_{i_n,n}p_{i_n-1,n}<0$,  $p_{m_n-i_n+1,n}p_{m_n-i_n,n}<~0$  hold and the sequences $(b_{0,k})$,  $(b_{m_k,k})$  do not converge to $0$ simultaneously, for all cases the inequality  
$$
\lim_{k \to \infty}{B^{'} _k} \ne \lim_{k \to \infty}{B^{''} _k}
$$
holds. Therefore, $F$ does not have a finite or an infinite derivative at an arbitrary nega-$\tilde Q$-rational point from the segment $[0;1]$.
\end{proof}


\section{A graph of the continuous unique solution of the system  \eqref{def: function eq. system - nega-tilde Q 1}}

\begin{theorem}
Let the elements  $p_{i,n}$ of the matrix $P$ does not equal to  $0$.

Let  $x=\Delta^{-\tilde Q}_{i_1(x)i_2(x)...i_n(x)...}$ be a fixed number and the sequence  $\left(\psi_{i_n(x),n}\right)$ be a corresponding to him sequence of affine transformations of the space  $\mathbb R^2$:
$$
\psi_{i_n(x),n}:
\left\{
\begin{aligned}
x^{'}&=\tilde a_{i_n(x),n}+\tilde q_{i_n(x),n}x;\\
y^{'}& = \tilde{\beta}_{i_n(x),n}+\tilde{p}_{i_n(x),n}y,\\
\end{aligned}
\right.
$$
where $i_n \in N^{0} _{m_n}$.

Then the graph  $\Gamma_{F}$ of the function $F$ is the following set in the space $\mathbb R^2$:
$$
\Gamma_{F}=\bigcup_{x \in [0;1]}{( ... \circ \psi_{i_n(x),n}\circ ...\circ \psi_{i_2(x),2} \circ \psi_{i_1(x),1}(\Gamma_F))}.
$$
\end{theorem}
\begin{proof} Since the function $F$ is the continuous unique solution of the system \eqref{def: function eq. system - nega-tilde Q 2}, it is clear that 
$$
\psi_{i_1,1}:
\left\{
\begin{aligned}
x^{'}&= q_{i_1,1}x+ a_{i_1,1};\\
y^{'}& = {\beta}_{i_1,1}+{p}_{i_1,1}y,\\
\end{aligned}
\right.
$$
$$
\psi_{i_2,2}:
\left\{
\begin{aligned}
x^{'}&=q_{m_2-i_2,2}x+a_{m_2-i_2,2};\\
y^{'}& = {\beta}_{m_2-i_2,2}+{p}_{m_2-i_2,2}y,\\
\end{aligned}
\right.
$$
etc., whence,  
$$
\psi_{i_n,n}:
\left\{
\begin{aligned}
x^{'}&=\tilde q_{i_n,n}x+\tilde a_{i_n,n};\\
y^{'}& = \tilde{\beta}_{i_n,n}+\tilde{p}_{i_n,n}y.\\
\end{aligned}
\right.
$$
Therefore,
$$
\bigcup_{x \in [0;1]}{( ... \circ \psi_{i_n(x),n}\circ ...\circ \psi_{i_2(x),2} \circ \psi_{i_1(x),1}(\Gamma_F))}\equiv G \subset  \Gamma_{\tilde{F}},
$$
because
$$
F(x^{'})=F(\tilde a_{i_n,n}+\tilde q_{i_n,n}x)=\tilde \beta_{i_n,n}+\tilde p_{i_n,n}y=y^{'}.
$$

Let $T(x_0,F(x_0))\in \Gamma_{\tilde{F}}$, $x_0=\Delta^{-\tilde Q} _{i_1i_2...i_n...}$ be a some fixed point from  $[0;1]$. Let $x_n$ be a point from  $[0;1]$ such that $x_n=\hat \varphi^n (x_0)$.

Since $i_1 \in N^0 _{m_1}$, $i_2 \in N^0 _{m_2}$, ..., $i_n \in N^0 _{m_n}$ and the system \eqref{def: function eq. system - nega-tilde Q 1} is true and the condition 
$\overline{T}\left(\hat{\varphi}^{n}(x_0);F\left(\hat{\varphi}^{n}(x_0)\right)\right)\in~\Gamma_{\tilde{F}}$ holds, it follows that 
$$
\psi_{i_n,n}\circ ...\circ \psi_{i_2,2} \circ \psi_{i_1,1}\left(\overline{T}\right)=T_0(x_0;F(x_0))\in \Gamma_{F}, ~~~i_n \in N^0 _{m_n},~~~n\to \infty.
$$

Whence, $\Gamma_{F}\subset G$. So,
$$
\Gamma_{F}=\bigcup_{x \in [0;1]}{( ... \circ \psi_{i_n(x),n}\circ ...\circ \psi_{i_2(x),2} \circ \psi_{i_1(x),1}(\Gamma_F))}.
$$
\end{proof}

\end{document}